\documentclass[a4paper,11pt]{amsart}

\usepackage{mathpazo}
\usepackage{amssymb}
\usepackage[pagebackref,
    ,pdfborder={0 0 0}
    ,urlcolor=black,a4paper,hypertexnames=false]{hyperref}
\hypersetup{pdfauthor=Clara L\"oh,pdftitle=A note on bounded-cohomological dimension of discrete groups}

\usepackage{pgf}
\usepackage{tikz}
\usetikzlibrary{decorations.pathmorphing,calc}
\usepackage[all]{xy}
\SelectTips{cm}{}
\usepackage{pifont}

\newtheorem{thm}{Theorem}[section]
\newtheorem{prop}[thm]{Proposition}
\newtheorem{lem}[thm]{Lemma}
\newtheorem{cor}[thm]{Corollary}

\newtheorem{question}[thm]{Question}

\theoremstyle{remark}
\newtheorem{rem}[thm]{Remark}
\newtheorem{exa}[thm]{Example}

\theoremstyle{definition}
\newtheorem{defi}[thm]{Definition}

\usepackage{amsfonts}
\newcommand{\Z}{\mathbb{Z}}

\newcommand{\R}{\mathbb{R}}
\newcommand{\N}{\mathbb{N}}

\DeclareMathOperator*{\bigfree}{\bigstar}
\DeclareMathOperator{\id}{id}


\def\epsilon{\varepsilon}

\DeclareMathOperator{\im}{im}
\DeclareMathOperator{\bcd}{bcd}
\DeclareMathOperator{\cdim}{cd}
\newcommand{\cd}{\cdim_\Z}

\DeclareMathOperator{\Grp}{{\sf Group}}

\DeclareMathOperator{\Homeo}{Homeo}

\def\HK#1{%
  \Homeo_K(\R^{#1})}

\def\args{\;\cdot\;}

\def\rlhom#1#2{%
  {\overline{H}^{\ell^1}_{#1}}(#2; \R)}
\def\rbc#1#2{%
  \smash{\overline{H}_b^{#1}}(#2;\R)}

\makeatletter
\newcommand\norm{\bBigg@{0.8}}

\makeatother

\newcommand{\inparens}[2][norm]{\csname #1l\endcsname(#2%
                                 \csname #1r\endcsname)\mathclose{}}

\newcommand{\lob}[3][norm]{%
  {\overline{b}^{\ell^1}_{#2}\inparens[#1]{#3}}}

\usepackage{color}
\usepackage{pdfcolmk}

\long\def\forget#1{}

\def\draftinfo{}

\author{Clara L\"oh}
\title[Bounded-cohomological dimension of groups]{A note on bounded-cohomological dimension\\ of discrete groups}
\date{\today.\ 
    This work was supported by the CRC~1085 \emph{Higher Invariants} 
    (Universit\"at Regensburg, funded by the DFG).
    \draftinfo\\
     MSC~2010 classification: 55N35, 20J06, 20E99}
\begin{document}

\begin{abstract}
  Bounded-cohomological dimension of groups is a relative of classical
  cohomological dimension, defined in terms of bounded cohomology with
  trivial coefficients instead of ordinary group cohomology. We will
  discuss constructions that lead to groups with infinite
  bounded-cohomological dimension, and we will provide new examples of
  groups with bounded-cohomological dimension equal to~$0$. In
  particular, we will prove that every group functorially embeds into
  an acyclic group with trivial bounded cohomology.
\end{abstract}

\maketitle

\section{Introduction}

Bounded cohomology~$H_b^*(\args;\R)$ is a functional-analytic version
of ordinary group cohomology, defined in terms of cocycles that are
bounded with respect to the $\ell^1$-norm on the bar
complex~\cite{vbc,ivanov,monod,buehler}
(Section~\ref{sec:bcl1}). Bounded cohomology has various applications
in geometry and geometric group
theory~\cite{vbc,loeh,monod,monodicm}. There is a natural comparison
map between bounded cohomology and ordinary group cohomology with
$\R$-coefficients; however, this comparison map in general is neither
surjective nor injective, and bounded cohomology usually is hard to
calculate.

We will consider the following bounded analogue of classical
cohomological dimension of groups with trivial coefficients (which
should not be confused with the bounded-cohomological dimension with
varying coefficients~\cite{monodicm}):

\begin{defi}[bounded-cohomological dimension~\protect{\cite{ho}}]
  The \emph{bounded-co\-ho\-mological dimension} of a group~$G$ is defined by 
  \[
  \bcd (G) := \sup \bigl\{ n \in \N \bigm| H^n_b(G;\R) \not\cong 0 \bigr\}
  \in \N \cup \{\infty\}.
  \]
\end{defi}

In contrast with the corresponding invariant for ordinary group
cohomology, not much is known about bounded-cohomological
dimension. For example, bounded-co\-ho\-mological dimension does
\emph{not} admit an obvious bound in terms of the geometric dimension
of groups.

In this article, we will provide new examples of groups with
bounded-cohomological dimension equal to~$0$ as well as of basic
constructions that lead to groups with infinite bounded-cohomological
dimension.

For all amenable groups~$G$ one has~$\bcd(G) =
0$~\cite{vbc,ivanov}. For all groups~$G$ we have~$H^1_b(G;\R) \cong
0$~\cite{mitsumatsu} and hence $\bcd(G) \neq 1$. Free groups~$F$ of
rank at least~$2$ satisfy~$\bcd(F) \geq
3$~\cite{somasurf,somanonbanach,yoshida}; however, the exact value
of~$\bcd(F)$ is unknown. If $M$ is an oriented closed connected
$n$-manifold with non-zero simplicial volume, then~$\bcd \pi_1(M) \geq
n$~\cite{vbc}; this happens, for example, if $M$ admits a metric of
negative sectional curvature~\cite{inoueyano}. More generally, if $G$
is a hyperbolic group, then the comparison map~$H^*_b(G;\R)
\longrightarrow H^*(G;\R)$ is surjective in degree at
least~$2$~\cite{mineyev}, which gives lower bounds on~$\bcd
G$. Bounded cohomology in degree~$2$ is rather well understood in
terms of quasi-morphisms/pseudo-characters~\cite{grigorchuk}. For
example, $\bcd G \geq 2$ whenever $G$ is a sufficiently non-trivial
amalgamated free product~\cite{grigorchuk,grigorchuk2,fujiwara}.

No examples of groups~$G$ with~$\bcd(G) \not \in \{0,\infty\}$ seem to
be known.

\subsection*{Groups with small bounded cohomology}

Mather~\cite{mather} showed that the (discrete) group~$\HK n$ of
homeomorphisms~$\R^n \longrightarrow \R^n$ with compact support is
\emph{acyclic} for all~$n \in \N_{> 0}$, i.e., $H_k(\HK n;\Z) \cong 0$
for all~$k\in \N_{>0}$. Matsumoto and Morita~\cite{mm} refined Mather's
proof in the normed setting to obtain~$\bcd \HK n = 0$.  This was the
first example of a non-amenable group with trivial
bounded-cohomological dimension.

Baumslag, Dyer, and Heller~\cite[Section~4]{bdh} considered so-called
mitotic groups (see Section~\ref{subsec:mitoticgroups} for the
definition); mitotic groups have all the algebraic properties
necessary to carry out Mather's argument and Baumslag, Dyer,
Heller~\cite[Theorem~4.2]{bdh} proved that all mitotic groups are
acyclic.

Based on the normed refinement of Matsumoto and Morita of Mather's
proof, we will adapt the argument of Baumslag, Dyer, Heller to show
that mitotic groups have trivial bounded cohomology
(Section~\ref{sec:small}):

\begin{thm}[bounded cohomology of mitotic groups]\label{thm:small}
  If $G$ is a mitotic group, then $\bcd G = 0$.
\end{thm}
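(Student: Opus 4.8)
The plan is to mimic the proof that mitotic groups are acyclic (Baumslag--Dyer--Heller), but carried out in the bounded/normed setting following Matsumoto--Morita. Recall that a mitotic group $G$ embeds into a group $M$ together with a homomorphism exhibiting $G$ as a "mitosis": there is a copy of $G$ inside $M$, a second copy, and elements implementing a conjugation so that the two copies commute and their product is conjugate to the diagonal. This yields, for every $n$, a specific chain homotopy on the bar resolution showing that the map induced by the inclusion $G \hookrightarrow M$ kills $H_n$ for $n \ge 1$; since $G$ itself is (isomorphic to) a retract-free summand, one deduces $H_n(G;\Z) = 0$. The key observation is that all the maps appearing in that argument --- the mitosis homomorphisms, the conjugations, the shuffle/simplicial formulas for the chain homotopy --- are given by group homomorphisms and finite sums with bounded (indeed integer) coefficients, so they dualize to norm-nonincreasing, or at least norm-bounded, operators on the bounded cochain complex.

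Concretely, first I would recall the definition of a mitotic group and set up the relevant maps $\mu_0,\mu_1\colon G \to M$ and the conjugating elements; then I would write down, for each $n \ge 1$, the Baumslag--Dyer--Heller chain homotopy $s_n$ on the normalized bar complex $C_*(G)$ witnessing that $H_n(\mu_0) = H_n(\mu_1)$ and that this common map equals $H_n$ of the "doubling" map, forcing it to be both an isomorphism and twice itself, hence zero. Next I would dualize: apply $\Hom(\args,\R)$ and check that each $s_n$ restricts to a bounded operator on the subcomplex of bounded cochains $C_b^*(G;\R)$, with operator norm controlled independently of the cochain. This is exactly the step Matsumoto and Morita perform for Mather's argument, and the point is that a chain homotopy built from finitely many homomorphism-induced face/degeneracy-type maps dualizes to a bounded cochain homotopy. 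From boundedness of the homotopy one concludes that the comparison of $H_b^n(G;\R)$ induced by $\mu_0$ and by the doubling map agree, run the same "$x = 2x$" trick in $H_b^n(G;\R)$, and obtain $H_b^n(G;\R) = 0$ for all $n \ge 1$; together with $H_b^0(G;\R) \cong \R \neq 0$ and $H_b^1 \cong 0$ always, this gives $\bcd G = 0$.

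The main obstacle I anticipate is purely bookkeeping: one must verify that the specific chain homotopy used in \cite{bdh} really is given by a \emph{finite} $\Z$-linear combination of maps each induced by a group homomorphism (so that the dual operator is bounded), and that the norm bounds do not blow up with the cohomological degree $n$ in a way that would obstruct the conclusion in every fixed degree. In the homogeneous bar complex this amounts to tracking the supports of the relevant shuffle sums; since each homogeneous cochain is evaluated on tuples and the homotopy reindexes and relabels such tuples via homomorphisms, the dual of each summand is norm-nonincreasing and there are only finitely many summands in each degree, so $\|s_n^*\|$ is finite for every $n$ --- which is all that is needed, as we only need the vanishing statement degree by degree. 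A secondary point to be careful about is that a mitotic group need not itself be the base of a mitosis inside something with the full BDH structure unless one uses the definition that already builds this in; I would therefore state the mitosis data as part of the definition in Section~\ref{subsec:mitoticgroups} and invoke it directly, so that the normed argument applies verbatim to $G$ rather than only to an enveloping group.
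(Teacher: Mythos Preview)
Your proposal has a genuine gap at its core, not merely a bookkeeping issue. The Baumslag--Dyer--Heller argument does \emph{not} produce, for each~$n$, a single chain homotopy~$s_n$ built from finitely many homomorphism-induced maps that witnesses ``$i_* = 2i_*$''. The argument is inherently inductive: a single mitosis $i \colon G \hookrightarrow M$ yields $H_q(i\circ f;\R)=0$ only under the hypothesis that $H_k(f;\R)=0$ for all $1\le k\le q-1$, where $f$ is a composition of earlier mitoses. The reason is the K\"unneth step: on the chain level one has
\[
(\gamma_d\circ i\circ f)_*(z) \;=\; (i\circ f)_*(z) + (\gamma_s\circ i\circ f)_*(z) + \mu_*\bigl(\text{cross-product of }D(z)\bigr) + \partial(\dots),
\]
where $D(z)$ is the intermediate-degree part of the Alexander--Whitney diagonal. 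This $D(z)$ is a cycle in the tensor complex, but it is a boundary only because the lower homology of~$f$ vanishes; writing it as a boundary requires \emph{choosing} primitives, and these choices are not induced by group homomorphisms. In the normed setting you must control the $\ell^1$-norms of these primitives, and there is no a~priori bound. Since there is no K\"unneth theorem for bounded cohomology, you cannot run the ``$x=2x$'' identity directly in~$H_b^n$ either; the mixed terms do not disappear for free.

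The paper therefore proceeds differently. It shows by induction on~$q$ that a composition of sufficiently many mitoses satisfies $(q,\kappa_q)$-UBC with a constant depending only on~$q$ (Proposition~\ref{prop:ubcmitosis}); the inductive step uses the $(k,\kappa_{q-1})$-UBC of shorter compositions precisely to supply bounded primitives for the K\"unneth cross-terms~$D(z)$. Since every boundary in~$C_q(M;\R)$ is supported on a finitely generated subgroup~$G_0$, and mitoticity of~$M$ provides an arbitrarily long chain of mitoses $G_0\subset G_1\subset\cdots\subset M$, one concludes that $M$ satisfies $q$-UBC for every~$q$. Combined with acyclicity (Theorem~\ref{thm:bdh}) and the Matsumoto--Morita criterion (Theorem~\ref{thm:ubc}), this gives $H_b^q(M;\R)\cong 0$. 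Your ``secondary point'' is also off: one does not fix mitosis data for~$M$ itself; rather, one iterates mitoses on finitely generated subgroups inside~$M$, and it is essential that the UBC constants obtained are independent of the subgroup chosen.
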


\begin{cor}[embedding groups into very acyclic groups]
  There is a functor~$M \colon \Grp \longrightarrow \Grp$ 
  and a natural transformation~$i \colon \id_{\Grp} \Longrightarrow M$ 
  with the following properties:
  \begin{enumerate}
    \item For all groups~$G$ the group~$M(G)$ is mitotic; in
      particular, $M(G)$ is acyclic and~$\bcd M(G) = 0$.
    \item For all groups~$G$, the homomorphism~$i_G \colon G
      \longrightarrow M(G)$ is injective.
    \item If $G$ is an infinite group, then $|M(G)| = |G|$, where $|\cdot|$ 
      denotes the cardinality.
  \end{enumerate}
\end{cor}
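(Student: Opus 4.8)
The plan is to reduce everything to the classical construction of Baumslag--Dyer--Heller, which embeds an arbitrary group into a mitotic group, and to verify that this construction is functorial and cardinality-preserving. Recall that given a group $G$, one builds a mitotic group in which $G$ embeds by an iterated "mitosis" procedure: starting from $G_0 := G$, one forms $G_{n+1}$ as a suitable amalgamated/HNN-type extension of $G_n$ that comes equipped with commuting copies and a conjugating element witnessing mitoticity, and then sets $M(G) := \bigcup_{n \in \N} G_n$. The key point is that each step in this construction is given by an explicit presentation built canonically out of (a presentation of) the previous group, so that a homomorphism $f \colon G \longrightarrow H$ induces compatible homomorphisms $G_n \longrightarrow H_n$ at every stage, hence a homomorphism $M(f) \colon M(G) \longrightarrow M(H)$; functoriality (preservation of identities and composition) is then immediate from the explicit formulas, and the inclusions $G = G_0 \hookrightarrow M(G)$ assemble into the natural transformation $i$.

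First I would set up the single mitosis step as a functor $\Grp \longrightarrow \Grp$: given $G$, present the one-step mitotic enlargement $\mu(G)$ via generators "$G$ together with two disjoint isomorphic copies of $G$ plus a stable letter" and the relations expressing that the two copies commute and that the stable letter conjugates the diagonal copy onto one factor (the standard mitotic relations from Section~\ref{subsec:mitoticgroups}). Because this presentation is natural in $G$, the assignment $G \mapsto \mu(G)$ is a functor and the canonical inclusion $G \hookrightarrow \mu(G)$ is a natural transformation; injectivity of this inclusion is part of the Baumslag--Dyer--Heller analysis (the copies embed because the relevant subgroups are amalgamated/associated along isomorphisms). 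Then $M := \mathrm{colim}_n\, \mu^n$, i.e.\ $M(G) = \bigcup_{n} \mu^n(G)$ along the natural inclusions, is again a functor, $i_G \colon G \longrightarrow M(G)$ is injective as a directed union of injections, and $M(G)$ is mitotic: any finitely many elements of $M(G)$ already lie in some $\mu^n(G)$, and the mitosis data for them is visible inside $\mu^{n+1}(G) \subseteq M(G)$. Part~(1) then follows from Theorem~\ref{thm:small} together with the Baumslag--Dyer--Heller acyclicity theorem, and part~(2) is the injectivity just noted.

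For part~(3), the cardinality bound, I would argue by induction: a group presented on a set of generators of cardinality $\kappa$ has cardinality at most $\max(\kappa, \aleph_0)$, and $\mu(G)$ is generated by three copies of $G$ plus one extra letter, so $|\mu(G)| = |G|$ whenever $G$ is infinite; taking a countable directed union preserves this, giving $|M(G)| = |G|$ for infinite $G$. The main obstacle, and the only place real care is needed, is \emph{injectivity of the one-step inclusion $G \hookrightarrow \mu(G)$} together with the verification that $\mu(G)$ genuinely satisfies the mitoticity axioms in the precise form required by Theorem~\ref{thm:small}; this is exactly the content of the Baumslag--Dyer--Heller construction, so the task reduces to recalling their definitions, checking that their construction can be phrased by a presentation functorial in $G$, and confirming that no choices are made that would break functoriality (e.g.\ using the canonical "second copy" $G \times \{2\}$ rather than an arbitrary abstract isomorphic group). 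Everything else — functoriality of colimits, the cardinality count, and the passage from mitotic to $\bcd = 0$ — is then routine given the results already stated.
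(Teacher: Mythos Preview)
Your proposal is correct and follows essentially the same approach as the paper: the paper's proof simply cites Baumslag--Dyer--Heller~\cite[Section~5, Theorem~4.2]{bdh} for the functor~$M$, the natural inclusion~$i$, injectivity, and the cardinality bound, and then invokes Theorem~\ref{thm:small} for the statement~$\bcd M(G)=0$; you do the same, merely unpacking more of the BDH construction rather than citing it outright. One minor imprecision: by definition a mitosis~$\mu(G)$ is generated by~$G\cup\{s,d\}$ (one copy of~$G$ and two extra letters), not ``three copies of~$G$ plus one letter'', but this does not affect your cardinality estimate.
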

\begin{proof}
  Baumslag, Dyer, Heller~\cite[Section~5, Theorem~4.2]{bdh}
  constructed a functor~$M$ with these properties;
  Theorem~\ref{thm:small} is only needed to deduce that $\bcd M(G) =
  0$ for all groups~$G$.
\end{proof}

In particular, mitotic groups in general are \emph{not} amenable: For
instance, $M(F_2)$ contains the non-amenable group~$F_2$ as subgroup.
Moreover, all algebraically closed groups are
mitotic~\cite[Corollary~4.4]{bdh}.

\begin{exa}
  Clearly, not every group~$G$ with~$\bcd G = 0$ is acyclic:
  For every~$n \in \N \cup \{\infty\}$ there is a group~$G$ that is 
  not acyclic and satisfies 
  \[ \bcd G = 0 
     \quad \text{and} 
     \quad \cd G = n = \cdim_\R G,
  \] 
  e.g., one can consider the amenable group~$G = \Z^{\oplus n}$.
\end{exa}

\subsection*{Groups with large bounded cohomology}

On the other hand, it is not hard to construct groups with large
bounded cohomology, and hence of infinite bounded-cohomological
dimension. For example, even though there does not seem to be a
general K\"unneth theorem for bounded cohomology, we can use the
interplay between bounded cohomology and $\ell^1$-homology and 
(co)homological cross-products to propagate non-trivial classes:

\begin{prop}\label{prop:large}
  For each~$n \in \N$ let $G_n$ be a group with~$H^2_b(G_n;\R) \not \cong 0$, 
  and let $G \in \{ \bigoplus_{n \in \N} G_n, \prod_{n \in \N} G_n \}$. 
  Then
  \[ \bcd G = \infty.
  \]
  More precisely: There exists a sequence~$(\varphi_n)_{n \in \N}
  \subset H^2_b(G;\R)$ such that for all~$n \in \N$ we have
  \[ \varphi_0 \cup \dots \cup \varphi_{n-1} \neq 0 \in H_b^{2 \cdot n}(G;\R). 
  \]
\end{prop}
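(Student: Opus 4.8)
The plan is to build the classes $\varphi_n$ by pulling back a fixed nonzero class in $H_b^2(G_n;\R)$ along the projection $G \to G_n$, and then to detect the nonvanishing of the cup products by pairing against $\ell^1$-homology classes coming from cross-products. First I would fix, for each $n$, a class $\alpha_n \in H_b^2(G_n;\R)$ with $\alpha_n \neq 0$; by the duality between bounded cohomology and $\ell^1$-homology (the fact that $\overline{H}^{\ell^1}_*(\args;\R)$ is the predual in the sense that a bounded cohomology class is nonzero iff it pairs nontrivially with some $\ell^1$-homology class), there is a class $\beta_n \in \overline{H}^{\ell^1}_2(G_n;\R)$ with $\langle \alpha_n, \beta_n\rangle \neq 0$. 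In the direct-sum or direct-product case there is a canonical projection $p_n \colon G \to G_n$ (for the direct product this is literally a coordinate projection; for the direct sum one uses that $G_n$ is a retract of $\bigoplus_k G_k$), and I set $\varphi_n := H_b^2(p_n;\R)(\alpha_n) \in H_b^2(G;\R)$.

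Next I would analyze the cup product $\varphi_0 \cup \dots \cup \varphi_{n-1}$. Because $\varphi_j = p_j^* \alpha_j$ and the $p_j$ together assemble into a homomorphism $G \to G_0 \times \dots \times G_{n-1}$ (the finite product), the product $\varphi_0 \cup \dots \cup \varphi_{n-1}$ is the pullback along this homomorphism of the cross-product $\alpha_0 \times \dots \times \alpha_{n-1} \in H_b^{2n}(G_0 \times \dots \times G_{n-1};\R)$. So it suffices to show that this cross-product is nonzero in $H_b^{2n}$ of the finite product. For that I would use the $\ell^1$-homology cross-product: the classes $\beta_0, \dots, \beta_{n-1}$ have a homological cross-product $\beta_0 \times \dots \times \beta_{n-1} \in \overline{H}^{\ell^1}_{2n}(G_0 \times \dots \times G_{n-1};\R)$, and the compatibility of the Kronecker pairing with cross-products on both sides gives $\langle \alpha_0 \times \dots \times \alpha_{n-1}, \beta_0 \times \dots \times \beta_{n-1}\rangle = \prod_{j=0}^{n-1} \langle \alpha_j, \beta_j\rangle \neq 0$, whence the cross-product class is nonzero, and therefore so is its pullback $\varphi_0 \cup \dots \cup \varphi_{n-1}$.

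The main obstacle, and the reason one passes through $\ell^1$-homology rather than working with a naive Künneth statement, is precisely that there is no general Künneth theorem for bounded cohomology: one cannot assert that $H_b^{2n}(G_0 \times \dots \times G_{n-1};\R)$ contains a tensor product of the factors. The point of using $\ell^1$-homology is that the $\ell^1$-homological cross-product is available unconditionally at the chain level (the algebraic tensor product of $\ell^1$-chain complexes maps into the $\ell^1$-completion of the tensor product, carrying cycles to cycles), and the bounded-cohomological cross-product is dual to it. So the only things I really need to verify carefully are: (i) the cross-product pairing formula $\langle \alpha \times \alpha', \beta \times \beta'\rangle = \langle \alpha,\beta\rangle \cdot \langle \alpha',\beta'\rangle$ holds in this normed/$\ell^1$ setting — this is a chain-level computation with the Eilenberg–Zilber and Alexander–Whitney maps, which are norm-nonincreasing up to controlled constants, so it goes through; and (ii) naturality of cup/cross-products under the pullback $G \to \prod_j G_j$, which is formal. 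Finally, since the first $2n$-many $\varphi$'s cup to something nonzero, $H_b^{2n}(G;\R) \neq 0$ for every $n$, and therefore $\bcd G = \infty$.
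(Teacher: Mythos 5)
Your construction follows essentially the same route as the paper's (which packages it as Propositions~\ref{prop:l1inherit}, \ref{prop:l1kuenneth} and~\ref{prop:largeprod}): pull classes back from the factors, identify the cup product with the pullback of a cross-product along the homomorphism $G \to G_0 \times \dots \times G_{n-1}$, and detect it via the Kronecker pairing with an $\ell^1$-homological cross-product (Proposition~\ref{prop:crosscomp}). There is, however, one genuine gap: the duality you invoke to produce $\beta_n$ --- ``a bounded cohomology class is nonzero iff it pairs nontrivially with some $\ell^1$-homology class'' --- is false in general. The weak duality principle (Proposition~\ref{prop:duality}) only says that $\rbc{k}{G} \to (\rlhom{k}{G})'$ is \emph{surjective}; it is not injective, and in particular nonzero classes in $H^k_b(G;\R)$ of vanishing seminorm (these exist, e.g., in degree~$3$ by Soma's non-Banach examples) pair trivially with all of $\rlhom{k}{G}$. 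So, as written, the existence of $\beta_n$ with $\langle \alpha_n,\beta_n\rangle \neq 0$ is unjustified. The step is correct in the one degree where you use it, namely degree~$2$, but for a nontrivial reason specific to that degree: by Matsumoto--Morita (the input the paper isolates as Proposition~\ref{prop:l1inherit}(2)), $H^2_b(G_n;\R) \not\cong 0$ forces $\rlhom{2}{G_n} \neq 0$; given that, weak duality lets you choose a (possibly different) class $\alpha_n \in \rbc{2}{G_n}$ and $\beta_n \in \rlhom{2}{G_n}$ with $\langle \alpha_n, \beta_n\rangle \neq 0$, which is all the statement requires. This degree-$2$ fact must be cited or reproved; it cannot be replaced by a general ``predual'' argument.

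Two smaller points. For the final conclusion ``therefore so is its pullback'' you should say explicitly that the projection $G \to G_0 \times \dots \times G_{n-1}$ is split by the canonical inclusion of the finite product (in both the $\bigoplus$ and the $\prod$ case), so the induced map on bounded cohomology is injective; equivalently, push $\beta_0 \times \dots \times \beta_{n-1}$ forward into $\rlhom{2n}{G}$ along that inclusion and pair it with $\varphi_0 \cup \dots \cup \varphi_{n-1}$ directly. Also, the compatibility formula carries a sign $(-1)^{p\cdot q}$, which of course does not affect nonvanishing. With the degree-$2$ input repaired, your argument is the paper's argument, just unfolded rather than factored through the intermediate propositions.
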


Here, $\bigoplus_{n \in \N} G_n$ denotes the subgroup of~$\prod_{n \in \N} G_n$ 
of families with finite support.

The proof of Proposition~\ref{prop:large} is given in
Section~\ref{subsec:examples}, where we also give further classes of
examples whose bounded cohomology can be easily calculated to a large
extent.

\begin{exa}
  Let $G := \bigoplus_\N F_2$. Then $G$ clearly is not acyclic and
  because of~$H_b^2(F_2;\R) \not\cong 0$ and $H^1(F_2;\R) \not\cong 0$
  we obtain
  \[ \bcd G = \infty
     \quad\text{and}\quad
     \cd G = \infty = \cdim_\R G.
  \]
\end{exa}

\begin{exa}
  There are acyclic groups with infinite bounded-cohomological
  dimension: For example, we can consider Higman's group
  \[  H := \langle a,b,c,d 
           \mid b^{-1}ab = a^2, c^{-1}bc = b^2, d^{-1}cd = c^2, a^{-1}da = d^2 
           \rangle;
  \]
  it is well known that $H$ is acyclic and that $H$ can be decomposed
  as a non-trivial amalgamated free
  product~\cite[Section~3]{bdh}. Hence, $H^2_b(H;\R) \not\cong
  0$~\cite{grigorchuk,fujiwara}. Therefore, 
  \[ \bcd \Bigl( \bigoplus_\N H \Bigr) = \infty. 
  \]
  On the other hand, acyclicity of~$H$, the K\"unneth theorem, and the
  compatiblity of homology with colimits shows that $\bigoplus_\N H$
  is acyclic.
\end{exa}

However, so far, no examples of \emph{finitely generated} non-amenable
groups~$G$ seem to be known where $\bcd G$ can be computed explicitly.

\begin{question}
  What can be said about the bounded-cohomological dimension
  of~$(\bigoplus_\Z F_2) \rtimes \Z$, where $\Z$ acts on~$\bigoplus_\Z
  F_2$ by shifting the summands?
\end{question}

\subsection*{Organisation of this article}

In Section~\ref{sec:bcl1}, we briefly recall the definition of bounded
cohomology and $\ell^1$-homology of discrete groups, as well as some
basic properties and constructions. In Section~\ref{sec:large}, we
will give simple examples of groups with large bounded cohomology; in
particular, we will prove Proposition~\ref{prop:large}. Finally, in
Section~\ref{sec:small}, we will compute the bounded cohomology of
mitotic groups, which proves Theorem~\ref{thm:small}.

\pagebreak

\section{Bounded cohomology and $\ell^1$-homology}\label{sec:bcl1}

We briefly review the definitions and basic properties of bounded
cohomology and $\ell^1$-homology of (discrete) groups with constant
coefficients: 

\subsection{Bounded cohomology and $\ell^1$-homology}

Bounded cohomology and $\ell^1$-homology are normed refinements 
of classical group (co)homology: We will use the following concrete 
description:

\begin{defi}[$\ell^1$-norm, bounded cohomology, $\ell^1$-homology]
  Let $G$ be a group. We denote the standard chain complex
  by~$C_*(G;\R)$; more precisely, for $k \in \N$ we write~$C_k(G;\R)
  := \bigoplus_{g \in G^k} \R \cdot g$ and
  \begin{align*}
    \partial_k \colon C_k(G;\R) & \longrightarrow C_{k-1}(G;\R) \\ 
    G^k \ni (g_1,\dots, g_k) & \longmapsto 
    (g_2, \dots, g_k)\\ 
    & \phantom{\longmapsto} + \sum_{j=1}^{k-1} (-1)^j \cdot (g_1, \dots, g_j
    \cdot g_{j+1}, \dots, g_k)\\ 
    & \phantom{\longmapsto} + (-1)^k \cdot (g_1, \dots, g_{k-1}).
  \end{align*}
  We denote the $\ell^1$-norm on~$C_k(G;\R)$ associated with the 
  basis~$G^k$ by~$\|\cdot\|_1$. Notice that~$\| \partial_k\| \leq k+1$ 
  with respect to the $\ell^1$-norms.
  \begin{itemize}
    \item The completion of~$C_*(G;\R)$ with respect to the
      $\ell^1$-norm is denoted by~$C_*^{\ell^1}(G;\R)$, the
      \emph{$\ell^1$-chain complex of~$G$}. 
    \item The topological dual
      of~$C_*(G;\R)$ with respect to the $\ell^1$-norm is denoted
      by~$C_b^*(G;\R)$, the \emph{bounded cochain complex of~$G$}.
    \item The homology~$H_*^{\ell^1}(G;\R)$ of~$C_*^{\ell^1}(G;\R)$ is
      called \emph{$\ell^1$-homology of~$G$}. The reduced
      homology~$\rlhom * G$ (i.e., kernel modulo closure of the image
      of the boundary operator) of~$C_*^{\ell^1}(G;\R)$ is called
      \emph{reduced $\ell^1$-homology of~$G$}.
    \item The cohomology~$H^*_b(G;\R)$ of~$C_b^*(G;\R)$ is 
      \emph{bounded cohomology of~$G$}. The reduced cohomology~$\rbc * G$ 
      of~$C_b^*(G;\R)$ is called \emph{reduced bounded cohomology of~$G$}.
  \end{itemize}
\end{defi}

Clearly, all these constructions are functorial with respect to group
homomorphisms and the inclusion~$C^*_b(\args;\R) \hookrightarrow
C^*(\args;\R)$ induces a natural transformation between bounded
cohomology and ordinary group cohomology, the so-called
\emph{comparison map}.

The $\ell^1$-norm and its dual norm induce semi-norms on
$\ell^1$-homology and bounded cohomology, respectively. By definition,
these semi-norms are norms on reduced $\ell^1$-homology and reduced
bounded cohomology, which then consist of Banach spaces.

More background on (co)homology of normed (co)chain complexes and on
descriptions of bounded cohomology and $\ell^1$-homology in terms of
homological algebra can be found in the
literature~\cite{vbc,ivanov,monod,mm,loehl1,buehler}.

\subsection{Evaluation and duality}

Evaluation gives rise to a weak form of duality between bounded
cohomology and $\ell^1$-homology. If $G$ is a group and $k \in \N$,
then the evaluation map
\begin{align*}
  \langle \args, \args \rangle 
  \colon C_b^k(G;\R) \otimes_\R C_k(G;\R)  
  & \longrightarrow \R
  \\
  (f,c) & \longmapsto f(c)
\end{align*}
is compatible with the (co)boundary operators and it is continuous
with respect to the (dual) $\ell^1$-norm and hence induces a well-defined
natural \emph{Kronecker product}
\begin{align*}
  \langle \args, \args\rangle \colon
  \rbc k G \otimes_\R \rlhom k G 
  & \longrightarrow \R.
\end{align*}

\begin{prop}[weak duality principle~\protect{\cite{mm,loehl1}}]\label{prop:duality}
  Let $G$ be a group and let $k \in \N$. Then the map
  \[ \rbc k G \longrightarrow \bigl( \rlhom k G \bigr)'
  \]
  induced by the Kronecker product is surjective.
\end{prop}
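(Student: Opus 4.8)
The plan is to construct, for a given continuous linear functional~$\varphi \in \bigl( \rlhom k G \bigr)'$, an element of~$\rbc k G$ that maps to it under the Kronecker product. The natural candidate is obtained by precomposing~$\varphi$ with the quotient map, so the real content is to represent this composite by a \emph{bounded} cochain and to check that it is a genuine cocycle. First I would unwind the definitions: a class in~$\rbc k G$ is represented by a cocycle in~$C^k_b(G;\R)$, and pairing it against cycles in~$C_k^{\ell^1}(G;\R)$ descends to reduced $\ell^1$-homology precisely because bounded cocycles annihilate boundaries and, by continuity, also annihilate the closure of the image of~$\partial_{k+1}$. So the Kronecker product on the reduced level is well-defined, and I want its adjoint to be onto.

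The key step is a Hahn--Banach argument. Let $Z_k \subseteq C_k^{\ell^1}(G;\R)$ be the (closed) subspace of $\ell^1$-cycles, and $B_k := \overline{\im \partial_{k+1}}$ its closed subspace of "$\ell^1$-boundaries''; then $\rlhom k G = Z_k / B_k$. Given~$\varphi \in (Z_k/B_k)'$, pull it back to a bounded functional~$\widetilde\varphi$ on~$Z_k$ that vanishes on~$B_k$. By the Hahn--Banach theorem, extend~$\widetilde\varphi$ to a bounded linear functional~$f$ on all of~$C_k^{\ell^1}(G;\R)$ without increasing the norm. Restricting~$f$ to the dense subspace~$C_k(G;\R)$ and using that the $\ell^1$-norm is the norm in question, $f$ defines an element of the topological dual~$C^k_b(G;\R)$. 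It remains to see that $f$ is a cocycle: for any~$c \in C_{k+1}(G;\R)$ we have $\partial_{k+1} c \in B_k \subseteq Z_k$, hence $(\delta f)(c) = f(\partial_{k+1} c) = \widetilde\varphi(\partial_{k+1} c) = 0$, so $\delta f = 0$ and $f$ represents a class in~$H^k_b(G;\R)$, and a fortiori in~$\rbc k G$.

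Finally I would verify that the class of~$f$ maps to~$\varphi$: for a reduced $\ell^1$-homology class~$[z]$ with~$z \in Z_k$, the Kronecker product gives~$\langle [f], [z] \rangle = f(z) = \widetilde\varphi(z) = \varphi([z])$, as desired. The main obstacle, such as it is, is entirely bookkeeping: one must be careful that the various dualities ($C^k_b$ is the topological dual of $C_k$ with the $\ell^1$-norm, while $C_k^{\ell^1}$ is the completion) interact correctly, so that a bounded functional on the completion~$C_k^{\ell^1}(G;\R)$ is the same thing as a bounded functional on~$C_k(G;\R)$, and that "vanishing on~$B_k$'' (the \emph{closure} of the boundaries) is exactly what is needed both to make~$\widetilde\varphi$ well-defined on the quotient and to force~$\delta f = 0$. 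No quantitative control on norms is claimed in the statement, so plain Hahn--Banach suffices and no issue of (co)exactness or of the comparison map enters.
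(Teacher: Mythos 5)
Your argument is correct: the Hahn--Banach extension of the pulled-back functional on the closed subspace of $\ell^1$-cycles (vanishing on the closure of the boundaries) is exactly what yields a bounded cocycle representing a preimage, and your bookkeeping about $C^k_b(G;\R)$ versus functionals on the completion $C_k^{\ell^1}(G;\R)$ is the only real point to check. The paper itself gives no proof but cites Matsumoto--Morita and the $\ell^1$-homology paper, and your proof is essentially the standard argument found there, so nothing further is needed.
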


\begin{prop}[bounded cohomology and $\ell^1$-acyclicity~\protect{\cite{mm}}]
  Let $G$ be a group. Then $\bcd G = 0$ if and only if $G$ is
  \emph{$\ell^1$-acyclic}, i.e., $H_k^{\ell^1}(G;\R) \cong 0$ for
  all~$k \in \N_{>0}$.
\end{prop}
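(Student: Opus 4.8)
The plan is to deduce this from the weak duality principle (Proposition~\ref{prop:duality}) together with the fact that $H^0_b(G;\R) \cong \R \not\cong 0$ always, so that the condition "$\bcd G = 0$" is equivalent to "$H^n_b(G;\R) \cong 0$ for all $n \geq 1$". First I would reduce everything to a statement about \emph{reduced} bounded cohomology and \emph{reduced} $\ell^1$-homology in positive degrees. The key observation is that vanishing of $H^n_b(G;\R)$ is equivalent to vanishing of the reduced group $\rbc n G$ \emph{together with} vanishing of the "unreduced correction term", i.e., the closure of the image of the coboundary must equal its (un-closed) image; but in fact a cleaner route is available: one shows that $H^n_b(G;\R) = 0$ forces $\rbc n G = 0$ trivially (reduced is a quotient of unreduced... no --- actually $\rbc n G$ is a subquotient), so I would instead argue that for each $n$, $H^n_b(G;\R) \cong 0$ if and only if $\rlhom n G \cong 0$, and likewise chase the unreduced/reduced discrepancy. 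The honest statement to use is: by Proposition~\ref{prop:duality}, the natural map $\rbc n G \to (\rlhom n G)'$ is surjective; hence if $\rlhom n G \not\cong 0$ then $(\rlhom n G)' \not\cong 0$ (a nonzero Banach space has nonzero dual, by Hahn--Banach), so $\rbc n G \not\cong 0$, and therefore $H^n_b(G;\R) \not\cong 0$. This gives the implication $\bcd G = 0 \Rightarrow H^n_b(G;\R) \cong 0$ for all $n \geq 1$, hence (taking $n \geq 1$) $\bcd G = 0 \Rightarrow \rlhom n G \cong 0$ for all $n \geq 1$.

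Conversely, I would show $H^{\ell^1}_n(G;\R) \cong 0$ for all $n \geq 1$ implies $\bcd G = 0$. Here the cleanest tool is the standard long exact/algebraic relationship between the (algebraic) dual of a chain complex and its cohomology: the bounded cochain complex $C^*_b(G;\R)$ is by definition the topological dual of $C_*(G;\R)$, which is isometrically the topological dual of its $\ell^1$-completion $C_*^{\ell^1}(G;\R)$. So I would invoke the known duality result (from~\cite{mm} or~\cite{loehl1}) that there is a natural isomorphism, or at least a natural exact sequence, relating $H^n_b(G;\R)$ to $(H_n^{\ell^1}(G;\R))'$ and the $\mathrm{Ext}$-type/"$\lim^1$"-type obstruction coming from non-closed images. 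Concretely, if $H_n^{\ell^1}(G;\R) \cong 0$ for $n \geq 1$, then the $\ell^1$-chain complex is exact in positive degrees; dualizing an exact sequence of Banach spaces with \emph{closed} images keeps it exact, and exactness of the $\ell^1$-complex in degree $n$ means the image of $\partial_{n+1}$ is dense in $\ker \partial_n$ --- but one needs it to actually \emph{be} $\ker\partial_n$, i.e., closed; this is where $\ell^1$-homology (unreduced) vanishing, as opposed to reduced, does the work, since unreduced vanishing $\ker\partial_n = \im\partial_{n+1}$ forces the image to be closed. Then $(C_*^{\ell^1}(G;\R))'= C^*_b(G;\R)$ is exact in positive degrees, giving $H^n_b(G;\R) \cong 0$ for $n \geq 1$, i.e., $\bcd G = 0$.

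The main obstacle is handling the gap between \emph{reduced} and \emph{unreduced} theories cleanly: the weak duality principle only controls the reduced groups, whereas $\bcd$ and $\ell^1$-acyclicity are phrased in unreduced terms. I expect the resolution is exactly the point of the hypothesis being \emph{unreduced} $\ell^1$-acyclicity (not merely reduced): unreduced vanishing $H_n^{\ell^1}(G;\R) \cong 0$ is strictly stronger than reduced vanishing precisely because it also forces the boundary images to be closed, which is what makes Banach-space duality behave exactly and lets one pass between $\rbc n G$ and $H^n_b(G;\R)$. So the careful bookkeeping is: (i) $\bcd G = 0 \Rightarrow \rbc n G \cong 0$ for $n\ge 1$ (immediate, reduced is a subquotient of unreduced), hence by surjectivity in Proposition~\ref{prop:duality} and Hahn--Banach, $\rlhom n G \cong 0$ for $n \ge 1$; but to upgrade to \emph{unreduced} $H_n^{\ell^1}(G;\R) \cong 0$ one uses that $H^n_b(G;\R) \cong 0$ already forces the relevant images closed on the cochain side, which by a duality argument forces them closed on the chain side too; and (ii) the converse as sketched above. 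Throughout I would cite~\cite{mm} and~\cite{loehl1} for the precise duality lemmas rather than reproving them, so that the argument is essentially a two-line consequence of Proposition~\ref{prop:duality} plus the closed-image phenomenon.
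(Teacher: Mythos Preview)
The paper does not give its own proof of this proposition: it is stated with a citation to Matsumoto--Morita~\cite{mm} and left at that. So there is nothing in the paper to compare your argument against; the result is imported wholesale.

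Your outline is essentially correct and is in fact the standard argument behind the cited result. A few comments to tighten it. For the implication ``$\ell^1$-acyclic $\Rightarrow \bcd G = 0$'': you correctly note that unreduced vanishing $H_n^{\ell^1}(G;\R)=0$ gives $\im\partial_{n+1}=\ker\partial_n$, hence closed; the step you then want is that $\partial_n$ induces a Banach isomorphism $C_n^{\ell^1}/\ker\partial_n \to \im\partial_n$ (open mapping theorem, using that $\im\partial_n$ is also closed, which follows from $H_{n-1}^{\ell^1}=0$ for $n\ge 2$ and from $\partial_1=0$ for $n=1$), after which Hahn--Banach finishes it. For the converse, your bookkeeping is right: $H^n_b(G;\R)=0$ surjects onto $\rbc n G$ (reduced is a \emph{quotient} of unreduced, not a subquotient), so $\rbc n G=0$; then Proposition~\ref{prop:duality} plus Hahn--Banach gives $\rlhom n G=0$. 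The ``duality argument'' you allude to for upgrading this to unreduced vanishing is precisely the \emph{closed range theorem}: since $\delta^n=(\partial_{n+1})'$ and $H^{n+1}_b=0$ forces $\im\delta^n=\ker\delta^{n+1}$ to be closed, the closed range theorem gives that $\im\partial_{n+1}$ is closed in $C_n^{\ell^1}(G;\R)$, whence $\im\partial_{n+1}=\overline{\im\partial_{n+1}}=\ker\partial_n$. Naming this theorem would turn your sketch into a complete proof.
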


\subsection{The cross-product in bounded cohomology and $\ell^1$-homology}\label{subsec:product}

The explicit descriptions of the (co)homological cross-products are
continuous with respect to the (dual) $\ell^1$-norm and lead to
well-defined cross-products in bounded cohomology and
$\ell^1$-homology:

For groups~$G$, $H$ the homological cross-product is induced from the 
maps
\begin{align*}
  \args\times\args \colon C_p(G;\R) \otimes_\R C_q(H;\R) 
  & \longrightarrow C_{p+q}(G \times H;\R) 
  \\
  (g_1, \dots, g_p) \otimes 
  (h_1, \dots, h_q)
  & \longmapsto \sum_{\sigma \in S_{p,q}} (-1)^{|\sigma|} \cdot 
  \bigl( (g_{\sigma_1(j)}, h_{\sigma_2(j)} \bigr)_{j \in \{1,\dots, p+q\}}.
\end{align*}
Here, $S_{p,q}$ is the set of all
$(p,q)$-shuffles~$\sigma=(\sigma_1,\sigma_2)$~\cite{dold}, and 
$|\sigma|$ denotes the sign of shuffles~$\sigma \in S_{p+q}$.

This cross-product is bounded in every degree with respect
to the norms induced from the $\ell^1$-norm. Because the
compatibility with the boundary operators carries over to the
completed chain complexes, we obtain a corresponding well-defined
natural cross-product on (reduced) $\ell^1$-homology.

Dually, for groups~$G$ and $H$ the cohomological cross-product is induced 
from the maps
\begin{align*}
  \args\times\args\colon
  C^p(G;\R) \otimes_\R C^q(H;\R) & \longrightarrow C^{p+q}(G \times H;\R) 
  \\
  \varphi \otimes \psi 
  & \longmapsto (-1)^{p \cdot q} \cdot 
                \bigl( ((g_1,h_1), \dots, (g_{p+q}, h_{p+q}))
  \\
  & \quad\quad
                \mapsto
                \varphi(g_1, \dots, g_p) \cdot \psi(h_{p+1}, \dots, h_{p+q})
                \bigr),
\end{align*}
as suggested by the Alexander-Whitney map. These maps preserve
boundedness and are continuous and thus induce a well-defined natural
cross-product on (reduced) bouneded cohomology.

\begin{defi}[cross-product on bounded cohomology/$\ell^1$-homology]
  Let $G$ and $H$ be groups and let $p,q \in \N$. Then the
  \emph{cross-product} on reduced $\ell^1$-homology and reduced
  bounded cohomology are defined via:
  \begin{align*}
    \args\times \args
    \rlhom p G \otimes_\R \rlhom q H 
    & \longrightarrow \rlhom{p+q} {G \times H}
    \\
    [c] \otimes [d] & \longmapsto [c\times d]
    \\
    \args\times\args \rbc p G \otimes_\R \rbc q H 
    & \longmapsto \rbc{p+q}{G\times H} 
    \\
    [f] \otimes [g] & \longmapsto [f \times g].
  \end{align*}
\end{defi}

As in the case of ordinary group (co)homology these cross-products 
are compatible in the following sense:

\begin{prop}[compatibility of cross-products]\label{prop:crosscomp}
  Let $G$ and $H$ be groups, let $p,q \in \N$ and let $\varphi \in
  \rbc p G$, $\psi \in\rbc q H$ as well as $\alpha \in
  \rlhom p G$, $\beta \in \rlhom q H$. Then
  \[ \langle \varphi \times \psi, \alpha \times \beta \rangle
     = (-1)^{p \cdot q} \cdot \langle \varphi,\alpha\rangle
       \cdot \langle \psi, \beta\rangle.
  \]
\end{prop}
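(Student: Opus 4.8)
The plan is to unwind both cross-products down to the level of (co)chains, where the claimed identity becomes a consequence of the Eilenberg--Zilber theorem. Let $A \colon C_*(G\times H;\R)\to C_*(G;\R)\otimes_\R C_*(H;\R)$ be the (unnormalised) Alexander--Whitney map
\[
  \bigl((g_1,h_1),\dots,(g_n,h_n)\bigr)\longmapsto \sum_{i=0}^{n}(g_1,\dots,g_i)\otimes(h_{i+1},\dots,h_n),
\]
and let $S \colon C_*(G;\R)\otimes_\R C_*(H;\R)\to C_*(G\times H;\R)$ be the shuffle map; by the formulas in Section~\ref{subsec:product}, $S$ is precisely the chain-level cross-product~$\args\times\args$. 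Unwinding the Alexander--Whitney description of the cochain cross-product shows that, for cochains $\varphi\in C^p_b(G;\R)$ and $\psi\in C^q_b(H;\R)$, one has $\varphi\times\psi=(-1)^{p\cdot q}\cdot(\varphi\otimes\psi)\circ A$ as bounded cochains on $C_*(G\times H;\R)$, where $\varphi\otimes\psi$ denotes the functional $c\otimes d\mapsto\varphi(c)\cdot\psi(d)$ on $C_p(G;\R)\otimes_\R C_q(H;\R)$. In particular, the factor $(-1)^{p\cdot q}$ in the statement is visibly the one coming from the Alexander--Whitney formula, and everything reduces to the interaction of $A$ and~$S$.

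Now fix a cocycle representative for each of $\varphi,\psi$ and an $\ell^1$-cycle representative for each of $\alpha,\beta$. Using $c\times d=S(c\otimes d)$ and the formula above, one obtains
\[
  \langle\varphi\times\psi,\ \alpha\times\beta\rangle
  =(-1)^{p\cdot q}\cdot\bigl\langle\varphi\otimes\psi,\ A\bigl(S(\alpha\otimes\beta)\bigr)\bigr\rangle .
\]
By the Eilenberg--Zilber theorem~\cite{dold}, $A\circ S$ differs from the identity on $C_*(G;\R)\otimes_\R C_*(H;\R)$ by a chain homotopy $D$; the standard such $D$ is bounded in each degree and hence extends continuously to the $\ell^1$-completions. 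Since $\alpha,\beta$ are cycles, $\alpha\otimes\beta$ is a cycle, and since $\varphi,\psi$ are cocycles, $\varphi\otimes\psi$ is a cocycle; therefore the homotopy contributions $\langle\varphi\otimes\psi,\ (\partial D+D\partial)(\alpha\otimes\beta)\rangle$ vanish, leaving
\[
  (-1)^{p\cdot q}\cdot\langle\varphi\otimes\psi,\ \alpha\otimes\beta\rangle
  =(-1)^{p\cdot q}\cdot\langle\varphi,\alpha\rangle\cdot\langle\psi,\beta\rangle ,
\]
which is the assertion. (Alternatively, one may pass to the normalised (co)chain complexes, on which $A\circ S=\id$ holds on the nose, so no homotopy is needed; recall that bounded cohomology and $\ell^1$-homology can be computed from the normalised complexes.)

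The only genuine work will be two routine-but-delicate bookkeeping points. First, signs: one must check that $A$ and $S$, with the $(p,q)$-shuffle sign conventions of Section~\ref{subsec:product}, really form an Eilenberg--Zilber pair, i.e.\ that $A\circ S$ is homotopic to the identity \emph{without} an extra sign. It is exactly here that the tempting shortcut ``only the trivial shuffle contributes'' fails at the unnormalised level: the non-trivial shuffles cancel in signed pairs rather than vanishing term by term, which is why one cannot bypass Eilenberg--Zilber (equivalently, the passage to normalised complexes). Second, boundedness: one must confirm that $A$, $S$, and the homotopy $D$ are bounded in each degree --- for $S$ a crude bound is the number $\binom{p+q}{p}$ of $(p,q)$-shuffles --- so that all of the above survives completion and dualisation and descends to the reduced Banach spaces $\rbc{p+q}{G\times H}$ and $\rlhom{p+q}{G\times H}$ on which the cross-products and the Kronecker product live.
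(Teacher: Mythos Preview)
Your proposal is correct and follows essentially the same route as the paper: both arguments reduce to the Eilenberg--Zilber relation~$A\circ(\args\times\args)\simeq\id$ together with the boundedness of the standard chain homotopy (the paper records this as Lemma~\ref{lem:awez}, with the homotopy called~$\Omega$), and then evaluate on (co)cycle representatives so that the homotopy terms vanish. The paper is slightly more explicit about passing to the completed tensor product~$\overline C_*$ of the $\ell^1$-chain complexes and extending~$A$ and~$\Omega$ there, but this is exactly the ``survives completion'' step you flag in your final paragraph.
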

\begin{proof}
  For classical group (co)homology this can be deduced from the
  above explicit descriptions of the cross-products on the (co)chain level
  and the fact that the Alexander-Whitney map~$A$
  satisfies~$A \circ (\args\times \args) \simeq \id$ on the (co)chain
  level (Lemma~\ref{lem:awez} below).

  Because this natural chain homotopy~$\Omega$ can be chosen to be
  bounded in each degree (Lemm~\ref{lem:awez}), the corresponding
  arguments carry over to the $\ell^1$-chain complex and the bounded
  cochain complex:

  Let $f \in C^p_b(G;\R)$, $g \in C^q_b(H;\R)$, $c \in
  C_p^{\ell^1}(G;\R)$, $d \in C_q^{\ell^1}(H;\R)$ be (co)cycles
  representing~$\varphi, \psi, \alpha, \beta$, respectively. Let
  $\overline C_*$ be the completion of~$C_*^{\ell^1}(G;\R) \otimes_\R
  C_*^{\ell^1}(H;\R)$ with respect to the norm induced by the
  $\ell^1$-norms. Then $A$ extends to a chain map~$\overline
  A \colon C_*^{\ell^1}(G\times H;\R) \longrightarrow \overline C_*$ 
  that is bounded in each degree, 
  and also $\Omega$ extends to~$\overline \Omega$ satisfying
  \[ \overline A \circ (\args \times \args) - \id
     = \partial \overline \Omega + \overline \Omega \circ \partial.
  \]
  Moreover, $f \otimes g$ also can be evaluated on elements
  of~$\overline C_{p+q}$ because $f$ and $g$ are bounded. Therefore,
  \begin{align*}
    (-1)^{p \cdot q} \cdot 
    (f \times g)(c \times d)
    & = (f \otimes g) 
        \bigl( A \circ (\args \times \args) (c \otimes d)\bigr)
        \\
    & = (f \otimes g) 
        \bigl( \overline A \circ (\args \times \args) (c \otimes d)\bigr)
        \\
    & = (f \otimes g) (c \otimes d) 
        \\
    &
        - (f \otimes g) \bigl( \partial \circ \overline \Omega (c \otimes d)\bigr)
        - (f \otimes g) \bigl( \overline \Omega \circ \partial (c \otimes d) \bigr)
        \\
    & = (f \otimes g) (c \otimes d)\\
    & = f(c) \cdot g(d),
  \end{align*}
  as desired.
\end{proof}

\begin{lem}\label{lem:awez}
  Let $G$ be a group. Then the cross-product 
  \[ \args \times \args \colon C_*(G;\R) \otimes_\R C_*(G;\R) 
     \longmapsto C_*(G\times G;\R)
  \]
  and the Alexander-Whitney map given by 
  \begin{align*}
    A \colon C_q(G \times G) & \longrightarrow \bigl(C_*(G) \otimes_\R C_*(G)\bigr)_q \\
    (G \times G)^q \ni
    \bigl((g_1,h_1), \dots, (g_q, h_q)\bigr) 
    & \longmapsto 
    \sum_{j=0}^q
    (g_1, \dots, g_j) \otimes (h_{j+1}, \dots, h_q)
  \end{align*}
  are natural chain maps that are mutually chain homotopy inverses of
  each other.  More precisely, there exist natural chain homotopies
  \begin{align*}
    \Xi \colon (\args\times\args) \circ A & \simeq \id
    \\
    \Omega \colon A \circ (\args\times\args) & \simeq \id
  \end{align*} 
  that are bounded in each degree (with respect to the norms induced
  from the respective \mbox{$\ell^1$-norms}), where the bounds in every degree~$q$ depend
  only on~$q$ and \emph{not} on the group~$G$.
\end{lem}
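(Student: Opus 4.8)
The plan is to recall the classical Eilenberg--Zilber machinery and then track boundedness. First I would observe that the cross-product $\args\times\args$ here is exactly the shuffle (Eilenberg--Zilber) map and $A$ is the Alexander--Whitney map; that they are mutually inverse chain homotopy equivalences is the classical Eilenberg--Zilber theorem, which can be proved by the method of acyclic models (see Dold \cite{dold}, or Mac Lane). The crucial point for us is not the mere existence of the homotopies $\Xi$ and $\Omega$ but that, on the \emph{free} chain complexes in question, there is an \emph{explicit} choice of these homotopies whose degree-$q$ component is given by a universal finite signed sum of face/degeneracy-type operators indexed only by $q$, with no dependence on $G$. So the strategy is: (1) write down such explicit formulae; (2) read off the operator norm bound in each degree from the number of terms; (3) note that naturality is immediate because the formulae are expressed purely in terms of the simplicial structure.

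Concretely, I would use the bar resolution viewed simplicially: the chain complex $C_*(G;\R)$ is the normalized (or unnormalized) chain complex of the simplicial set $NG$, and $C_*(G\times G;\R) = C_*(N(G\times G)) = C_*(NG \times NG)$. For simplicial sets $X$, $Y$ the Eilenberg--Zilber theorem provides, functorially, maps $\nabla\colon C_*(X)\otimes C_*(Y)\to C_*(X\times Y)$ (shuffle) and $AW\colon C_*(X\times Y)\to C_*(X)\otimes C_*(Y)$, with $AW\circ\nabla=\id$ on the nose and an explicit homotopy $\Omega$ for $\nabla\circ AW\simeq\id$; one standard choice of $\Omega$ (due to Eilenberg--Mac Lane, often called the "shuffle homotopy" or obtained from the Shih/Eilenberg--Mac Lane formula) is a signed sum over certain shuffle-type index sets of compositions of degeneracy and face operators, and in degree $q$ it is a sum of at most some explicit function $N(q)$ of $q$ many such compositions. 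Since each face operator has $\ell^1$-operator norm $1$ and each degeneracy operator has $\ell^1$-operator norm $1$ on the bases $G^k$ and $(G\times G)^k$ (they send basis elements to basis elements, up to the normalization quotient which is norm non-increasing), the operator norm of $\Omega_q$ is at most $N(q)$, a constant depending only on $q$. The same applies to the homotopy $\Xi$ on the other side, and to the boundedness of $\nabla$ and $AW$ themselves (the map $\nabla_{p,q}$ is a sum over the $\binom{p+q}{p}$ shuffles, hence has norm at most $\binom{p+q}{p}$, and $AW_q$ is a sum of $q+1$ terms, hence has norm at most $q+1$).

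The main obstacle is purely bookkeeping: writing down one specific homotopy $\Omega$ (and $\Xi$) by an honest closed formula rather than invoking acyclic models abstractly, and then bounding the number of summands in degree $q$. I would either cite the explicit Eilenberg--Mac Lane homotopy formula from the literature (e.g.\ \cite{dold} or Mac Lane's \emph{Homology}) and simply count terms, or — if one prefers a self-contained argument — run the acyclic-models construction and note inductively that at stage $q$ one only ever prepends a single degeneracy to previously constructed (finitely many, $G$-independent) terms, so the term count grows in a controlled, $G$-independent way. Either way, once an explicit $G$-independent formula is in hand, naturality is automatic and the degreewise bound is the term count, completing the proof; the degreewise bounds on $\nabla$ and $AW$ needed in Proposition \ref{prop:crosscomp} are the elementary counts $\binom{p+q}{p}$ and $q+1$ just mentioned.
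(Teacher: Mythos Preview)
Your proposal is correct and follows the same route as the paper, which simply records that ``this is a consequence of the classic proof via the acyclic model theorem~\cite{dold}''. Your version is a more explicit unpacking of that sentence: the acyclic-models construction (or the Eilenberg--Mac\,Lane/Shih formulae) produces the homotopies as universal finite signed sums of face/degeneracy compositions indexed only by the degree, and since these send basis tuples to basis tuples the $\ell^1$-operator norm is bounded by the term count~$N(q)$; naturality is automatic. One small caveat: the identity $AW\circ\nabla=\id$ holds on the \emph{normalized} complex, whereas the paper works with the unnormalized bar complex~$C_k(G;\R)=\R[G^k]$, so you still need a (trivially bounded) homotopy~$\Omega$ there rather than equality---but this does not affect your argument.
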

\begin{proof}
  This is a consequence of the classic proof via the acyclic model 
  theorem~\cite{dold}. 
\end{proof}

A more systematic study of acyclic models in the context of
$\ell^1$-homology was carried out by
Bouarich~\cite{bouarich}. Moreover, for sufficiently well-behaved
products the spectral sequence of Monod applies~\cite{monod}.

Furthermore, (reduced) bounded cohomology carries a natural ring
structure via the cup-product:

\begin{defi}[cup-product on bounded cohomology]
  Let $G$ be a group, and let $p, q \in \N$. Then the \emph{cup-product on~$\rbc * G$} 
  is given by
  \begin{align*}
    \args\cup\args \colon \rbc p G \otimes_\R \rbc q G & \longrightarrow \rbc {p+q} {G} \\
    \varphi \otimes \psi 
    & \longmapsto 
    \rbc {p+q}{\Delta_G} (\varphi \times \psi),
  \end{align*}
  where $\Delta_G \colon G \longrightarrow G \times G$ is the diagonal map.
\end{defi}

As in classical group cohomology, also the relation
\[ \varphi \times \psi 
   = \rbc{p}{p_G} (\varphi) \cup \rbc q {p_H} (\psi)
   \in \rbc {p+q} {G \times H}
\]
holds for all~$\varphi \in \rbc p G$, $\psi \in \rbc q H$, where
$p_G\colon G \times H \longrightarrow G$ and $p_H \colon G \times H
\longrightarrow H$ are the projections onto the factors.

\section{Groups with large bounded cohomology}\label{sec:large}

We will now construct groups with large bounded cohomology by taking
(free) products and exploiting the relation with $\ell^1$-homology. In
particular, we will prove Proposition~\ref{prop:large} and related
results.

\subsection{$\ell^1$-Betti numbers}\label{subsec:l1betti}
We introduce (reduced) $\ell^1$-Betti numbers of groups and
discuss their basic properties as well as their influence on bounded
cohomology.

\begin{defi}[$\ell^1$-Betti numbers]
  Let $G$ be a group and let $k \in \N$. Then the \emph{$k$-th 
  $\ell^1$-Betti number~$\lob k G$} is defined as the cardinality of 
  an $\R$-basis of~$\rlhom k G$; we also write~$\lob k G = \infty$ 
  if this cardinality is infinite.
\end{defi}

For example, $\ell^1$-Betti numbers satisfy the following simple 
inheritance properties:

\begin{prop}\label{prop:l1inherit} 
  Let $G$ be a group and let $k \in \N_{>0}$.
  \begin{enumerate}
    \item We have $\dim_\R H_b^k(G;\R) \geq \lob k G$. In particular:
      If $\lob k G \neq 0$, then we have~$\bcd G \geq k$.
    \item Conversely, if $H_b^2(G;\R) \not\cong 0$, then $\lob 2 G \neq 0$.
    \item If $H$ is a group that is a retract of~$G$, i.e., there are
      group homomorphisms $i \colon H \longrightarrow G$ and $r \colon
      G \longrightarrow H$ with~$r \circ i = \id_H$, then
      \[ \lob k G \geq \lob k H. 
      \]
    \item If $H$ is a group, then 
      \[ \lob k {G * H} \geq \lob k G + \lob k H. 
      \]
      In particular: If $\lob k G \neq 0$, then 
      $\lob[Big] k {\bigfree_\N G} = \infty.
      $
    \item If $G$ is countable and $\lob k G = \infty$, then
      \begin{align*} 
        \dim_\R H^{\ell^1}_k(G;\R) =  \dim_\R \rlhom k G = |\R|,
        \\
        \dim_\R H_b^k(G;\R) =  \dim_\R \rbc k G = |\R|.
      \end{align*}
  \end{enumerate}
\end{prop}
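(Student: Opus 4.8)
The plan is to establish the five assertions more or less separately, with only assertion~(2) requiring real work; the rest come from the weak duality principle (Proposition~\ref{prop:duality}), functoriality, and elementary Banach-space arguments. For~(1): $H_b^k(G;\R)$ surjects onto its reduced quotient $\rbc k G$, which by Proposition~\ref{prop:duality} surjects onto $(\rlhom k G)'$, and for any Banach space $W$ one has $\dim_\R W' \ge \dim_\R W$; hence $\dim_\R H_b^k(G;\R) \ge \lob k G$. If $\lob k G \ne 0$, then $\rlhom k G \ne 0$, so $(\rlhom k G)' \ne 0$ by Hahn--Banach, whence $H_b^k(G;\R) \ne 0$ and $\bcd G \ge k$. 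For~(3): the relation $r \circ i = \id_H$ together with functoriality of reduced $\ell^1$-homology makes the induced map $\rlhom k H \to \rlhom k G$ a split injection, so $\lob k G = \dim_\R \rlhom k G \ge \dim_\R \rlhom k H = \lob k H$.

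For~(4): both $G$ and $H$ are retracts of $G*H$, via the canonical inclusions $\iota_G,\iota_H$ and the retractions $\rho_G,\rho_H\colon G*H \to G,H$ that collapse the other free factor. I claim the induced map
\[
  \rlhom k {\iota_G} \oplus \rlhom k {\iota_H}
  \colon \rlhom k G \oplus \rlhom k H \longrightarrow \rlhom k {G*H}
\]
is injective. Given a relation $\rlhom k{\iota_G}(\alpha) + \rlhom k{\iota_H}(\beta) = 0$, apply $\rlhom k{\rho_G}$: since $\rho_G\circ\iota_G = \id_G$ while $\rho_G\circ\iota_H$ is the trivial homomorphism $H\to G$, which factors through the trivial group and hence induces $0$ on $\rlhom k{\args}$ for $k\ge 1$ (the $\ell^1$-homology of the trivial group vanishes in positive degrees), one obtains $\alpha = 0$; symmetrically $\beta = 0$. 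Thus $\lob k {G*H} \ge \lob k G + \lob k H$. The ``in particular'' follows by iteration, using that every free product of finitely many copies of $G$ is a retract of $\bigfree_\N G$.

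For~(5): if $G$ is countable then $C_k^{\ell^1}(G;\R) = \ell^1(G^k)$ is a separable Banach space, hence so is its subquotient $\rlhom k G$; since $\lob k G = \infty$ it is infinite-dimensional, and an infinite-dimensional separable Banach space has Hamel dimension exactly $|\R|$ (lower bound via Baire, upper bound since a separable metric space has at most $|\R|$ elements). As $\rlhom k G \ne 0$ forces $G$ to be infinite, $\ell^1(G^k)$ also has dimension $|\R|$, and squeezing $\dim_\R H_k^{\ell^1}(G;\R)$ between $\dim_\R \rlhom k G = |\R|$ (a quotient of it) and $\dim_\R \ell^1(G^k) = |\R|$ (which contains it as a subquotient) gives $\dim_\R H_k^{\ell^1}(G;\R) = |\R|$. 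On the cohomology side, $\dim_\R H_b^k(G;\R) \le |C_b^k(G;\R)| = |\ell^\infty(G^k)| = |\R|$, while by Proposition~\ref{prop:duality} $H_b^k(G;\R)$ surjects onto $(\rlhom k G)'$, the dual of an infinite-dimensional Banach space and hence of dimension at least $|\R|$; so all four dimensions equal $|\R|$.

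The main obstacle is~(2), whose actual content is the (classical) fact that $H_1^{\ell^1}(G;\R) \cong 0$ for \emph{every} group $G$, equivalently that $\partial_2\colon C_2^{\ell^1}(G;\R) \to C_1^{\ell^1}(G;\R)$ admits a bounded linear section. To construct one, observe that $\partial_2(g^{2^n}, g^{2^n}) = 2\cdot (g^{2^n}) - (g^{2^{n+1}})$, so a telescoping (Eilenberg-swindle) computation shows that $s_0(g) := \sum_{n\ge 0} 2^{-(n+1)}\cdot (g^{2^n},g^{2^n})$ satisfies $\partial_2 s_0(g) = (g)$ and $\|s_0(g)\|_1 \le 1$; as $C_1(G;\R)$ is dense in $C_1^{\ell^1}(G;\R)$, the bounded map $s_0$ extends to a section $s$ of $\partial_2$. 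Using $s$ one then checks that the coboundary $d^1 = (\partial_2)'\colon C_b^1(G;\R)\to C_b^2(G;\R)$ has image exactly $(\ker\partial_2)^\perp$ (for $\psi$ vanishing on $\ker\partial_2$, the functional $\psi\circ s$ is a preimage), whereas $\ker d^2 = \ker(\partial_3)' = (\overline{\im\partial_3})^\perp$; the standard isomorphism between annihilator quotients and duals of quotients then yields $H_b^2(G;\R) \cong \bigl(\ker\partial_2 / \overline{\im\partial_3}\bigr)' = (\rlhom 2 G)'$. Hence $H_b^2(G;\R) \not\cong 0$ forces $\rlhom 2 G \ne 0$, i.e.\ $\lob 2 G \ne 0$. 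One could instead simply quote this degree-$2$ duality, or the vanishing of $H_1^{\ell^1}(\args;\R)$, from the literature on $\ell^1$-homology cited above.
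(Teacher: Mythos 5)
Your proposal is correct, and for parts (1), (3) and (5) it runs essentially parallel to the paper (weak duality plus Hahn--Banach for (1), functoriality of reduced $\ell^1$-homology for (3), and a Banach-space/cardinality sandwich for (5)). Where you genuinely diverge is in (2) and (4). For (2) the paper simply cites Matsumoto--Morita; you instead reprove the needed input from scratch, constructing the explicit norm-one section $s_0(g)=\sum_{n\ge 0}2^{-(n+1)}(g^{2^n},g^{2^n})$ of $\partial_2$ (i.e.\ every group satisfies $1$-UBC, indeed $H_1^{\ell^1}(G;\R)\cong 0$) and then identifying $H_b^2(G;\R)\cong(\rlhom 2 G)'$ via the annihilator computation $\ker d^2=(\overline{\im\partial_3})^\perp$, $\im d^1=(\ker\partial_2)^\perp$; this makes the argument self-contained at the cost of length, and your telescoping section and the annihilator-quotient isomorphism both check out. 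For (4) the paper obtains biorthogonal families from Proposition~\ref{prop:duality} and pairs the images under $\rlhom k{i_G},\rlhom k{i_H}$ against pulled-back bounded classes $\rbc k{p_G}(\varphi_i)$, $\rbc k{p_H}(\psi_j)$; you instead apply the retractions $\rho_G,\rho_H$ directly on reduced $\ell^1$-homology and use that $\rho_G\circ\iota_H$ factors through the trivial group, whose reduced $\ell^1$-homology vanishes in positive degrees. By naturality of the Kronecker product the two arguments are close relatives, but yours is duality-free and avoids having to produce biorthogonal families of the full (possibly infinite) cardinality, while the paper's version has the side benefit of exhibiting explicit bounded cocycles that detect the classes, which is exactly what is reused in Proposition~\ref{prop:l1kuenneth} and the cup-product statements. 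One small caution: in (1) you invoke the blanket claim that $\dim_\R W'\ge\dim_\R W$ for every Banach space $W$, which for uncountable Hamel dimension is a nontrivial assertion; under the paper's convention that $\lob k G\in\N\cup\{\infty\}$ you only need the finite-dimensional case together with the fact that the dual of an infinite-dimensional normed space is infinite-dimensional (and, being Banach, of dimension at least $|\R|$), so you should phrase it that way rather than in full generality.
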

\begin{proof}
  The first part follows from Proposition~\ref{prop:duality}.  The
  second part follows from an observation of Matsumoto and
  Morita~\cite[Corollary~2.7 and Theorem~2.3]{mm}. 
  The third part is a direct
  consequence of functoriality of reduced $\ell^1$-homology. 

  The fourth part can be shown as follows: 
  From Proposition~\ref{prop:duality} we deduce that there exist
  families~$(\varphi_i)_{i \in I} \subset \rbc k G$, $(\alpha_i)_{i
    \in I} \subset \rlhom k G$ and $(\psi_j)_{j \in J} \subset \rbc k
  H$, $(\beta_j)_{j \in J} \subset \rlhom k H$ with~$|I| = \lob k G$
  and $|J| = \lob k H$ that satisfy
  \[ \langle \varphi_i, \alpha_{i'} \rangle = \delta_{i,i'}
     \quad\text{and}\quad
     \langle \psi_j, \beta_{j'} \rangle = \delta_{j,j'}
  \]
  for all~$i,i' \in I$ and all~$j, j' \in J$.
  Let $i_G \colon G \longrightarrow G * H$, $i_H \colon H \longrightarrow G *H$, 
  $p_G \colon G * H \longrightarrow G$, $p_H \colon G * H \longrightarrow H$ 
  be the canonical inclusions and projections associated with the free factors. 
  Then 
  \begin{align*}
    \bigl\langle 
    \rbc k {p_G}(\varphi_i)
    , \rlhom k {i_G} (\alpha_{i'})
    \bigr\rangle
    & = \langle \varphi_i, \alpha_{i,i'} \rangle
    = \delta_{i,i'},
    \\
    \bigl\langle
    \rbc k {p_G}(\varphi_i)
    , \rlhom k {i_H} (\beta_j)
    \bigr\rangle
    & = 
    \bigl\langle
    \varphi_i
    , \rlhom k 1 (\beta_j)
    \bigr\rangle
    = \langle \varphi_i, 0 \rangle
    = 0
  \end{align*}
  etc. for all~$i ,i' \in I$, $j,j' \in J$.
  Hence, $\lob k {G * H} \geq |I| + |J| = \lob k G + \lob k H$.

  We now prove the last part: By definition, $\rlhom k G$ and $\rbc k
  G$ are Banach spaces, and Banach spaces of infinite dimension have
  dimension at least~$|\R|$. On the other hand,
  countability of~$G$ implies that we have both $\dim_\R C^{\ell^1}_k(G;\R) \leq
  |\R|$ and $\dim_\R C_b^k(G;\R) \leq |\R|$. Hence,
  \begin{align*} 
    |\R| \leq \dim_\R \rlhom k G \leq \dim_\R H^{\ell^1}_k(G;\R) 
    \leq \dim_\R C^{\ell^1}_k(G;\R) \leq |\R|
    \\
    |\R| \leq \dim_\R \rbc k G \leq \dim_\R H^k_b(G;\R) 
    \leq \dim_\R C^k_b(G;\R) \leq |\R|. &
    \qedhere
  \end{align*}
\end{proof}

While it is not clear whether $\ell^1$-homology or bounded cohomology 
satisfy a simple K\"unneth theorem, we at least have the following weak version:

\begin{prop}\label{prop:l1kuenneth}
  Let $G$ and $H$ be groups and let $p,q \in \N$. Then
  \[ \lob {p+q} {G \times H} \geq \lob p G \cdot \lob q H. 
  \]
  In particular: If $\lob p G \neq 0$ and $\lob q H \neq 0$, then 
  $\bcd (G \times H) \geq p + q$.
\end{prop}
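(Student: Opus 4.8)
The plan is to mimic the classical Künneth argument for ordinary group homology, using only the weak duality principle (Proposition~\ref{prop:duality}) and the compatibility of cross-products (Proposition~\ref{prop:crosscomp}) to keep everything inside reduced $\ell^1$-homology and reduced bounded cohomology, where the semi-norms are honest norms and everything consists of Banach spaces. First I would invoke Proposition~\ref{prop:duality} twice to extract families $(\alpha_i)_{i \in I} \subset \rlhom p G$, $(\varphi_i)_{i \in I} \subset \rbc p G$ with $|I| = \lob p G$ and $\langle \varphi_i, \alpha_{i'}\rangle = \delta_{i,i'}$, and similarly $(\beta_j)_{j \in J} \subset \rlhom q H$, $(\psi_j)_{j\in J}\subset \rbc q H$ with $|J| = \lob q H$ and $\langle \psi_j, \beta_{j'}\rangle = \delta_{j,j'}$; this is exactly the device already used in the proof of Proposition~\ref{prop:l1inherit}\enumref{item:free} (the free-product case), so the only new ingredient is that I now form products rather than sums.

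The core step is then to consider the cross-products $\alpha_i \times \beta_j \in \rlhom{p+q}{G\times H}$ for $(i,j) \in I \times J$ and show they are $\R$-linearly independent, which immediately gives $\lob{p+q}{G\times H} \geq |I\times J| = \lob p G \cdot \lob q H$. For linear independence I would pair against the cross-product cochains $\varphi_i \times \psi_j \in \rbc{p+q}{G\times H}$: by Proposition~\ref{prop:crosscomp},
\[
  \langle \varphi_i \times \psi_j,\ \alpha_{i'}\times \beta_{j'}\rangle
  = (-1)^{p\cdot q}\cdot \langle \varphi_i,\alpha_{i'}\rangle \cdot \langle \psi_j,\beta_{j'}\rangle
  = (-1)^{p\cdot q}\cdot \delta_{i,i'}\cdot \delta_{j,j'},
\]
so the matrix of pairings between the family $(\alpha_i\times\beta_j)$ and the family $(\varphi_i\times\psi_j)$ is (up to the harmless sign $(-1)^{pq}$) the identity indexed by $I\times J$. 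A standard finite-support argument then rules out any non-trivial linear dependence: if $\sum_{(i,j)} \lambda_{i,j}\,(\alpha_i\times\beta_j) = 0$ with only finitely many $\lambda_{i,j}\neq 0$, pairing with a fixed $\varphi_{i_0}\times\psi_{j_0}$ forces $\lambda_{i_0,j_0}=0$. Hence $\dim_\R \rlhom{p+q}{G\times H}\geq \lob p G\cdot \lob q H$, which is the asserted inequality. The ``in particular'' clause follows from Proposition~\ref{prop:l1inherit}\enumref{item:dim}, since $\lob p G\neq 0$ and $\lob q H\neq 0$ give $\lob{p+q}{G\times H}\neq 0$ and therefore $\bcd(G\times H)\geq p+q$.

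I expect the main obstacle — really the only point requiring care — to be making sure the cross-product behaves well on \emph{reduced} classes and that the pairing identity of Proposition~\ref{prop:crosscomp} applies verbatim; but since that proposition is already stated for reduced classes and the cross-products on reduced $\ell^1$-homology and reduced bounded cohomology are defined in Section~\ref{subsec:product}, this is bookkeeping rather than a genuine difficulty. One should also note that when $\lob p G$ or $\lob q H$ is infinite the cardinal arithmetic still works: the product of an infinite cardinal with a non-zero cardinal is at least the maximum of the two, and the linear-independence argument produces a linearly independent family of size $|I|\cdot|J|$ regardless, so the inequality $\lob{p+q}{G\times H}\geq \lob p G\cdot \lob q H$ holds as an inequality of cardinals.
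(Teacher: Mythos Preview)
Your proposal is correct and follows essentially the same argument as the paper: extract dual families via Proposition~\ref{prop:duality}, pair cross-products using Proposition~\ref{prop:crosscomp} to obtain the Kronecker-delta identity, and conclude linear independence of the family~$(\alpha_i \times \beta_j)_{(i,j)\in I\times J}$. The paper is terser about the linear-independence step and the cardinal arithmetic, but the strategy and ingredients are identical.
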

\begin{proof}
  From Proposition~\ref{prop:duality} we deduce that there exist
  families~$(\varphi_i)_{i \in I} \subset \rbc p G$, $(\alpha_i)_{i
    \in I} \subset \rlhom p G$ and $(\psi_j)_{j \in J} \subset \rbc k
  H$, $(\beta_j)_{j \in J} \subset \rlhom k H$ with~$|I| = \lob p G$
  and $|J| = \lob q H$ that satisfy
  \[ \langle \varphi_i, \alpha_{i'} \rangle = \delta_{i,i'}
     \quad\text{and}\quad
     \langle \psi_j, \beta_{j'} \rangle = \delta_{j,j'}
  \]
  for all~$i,i' \in I$ and all~$j, j' \in J$. Hence, the compatibility 
  of the cross-products (Proposition~\ref{prop:crosscomp}) yields 
  \[ (-1)^{p \cdot q} \cdot 
     \langle \varphi_i \times \psi_j, \alpha_{i'} \times \beta_{j'} \rangle
     = \langle \varphi_i, \alpha_{i'}\rangle 
       \cdot \langle \psi_j, \beta_{j'} \rangle 
     = \delta_{i,i'} \cdot \delta_{j,j'} 
     = \delta_{(i,j), (i',j')}
  \] 
  for all~$(i,j),(i',j') \in I \times J$; thus,~$\lob {p+q} {G \times
    H} \geq |I| \cdot |J| \geq \lob p G \cdot \lob q H$.

  The second part follows then with help of Proposition~\ref{prop:l1inherit}.
\end{proof}

\subsection{Examples}\label{subsec:examples}

The observations from Section~\ref{subsec:l1betti} are now applied to 
concrete examples: 

\begin{defi}[infinite chains of cup-products in bounded cohomology]
  Let $G$ be a group. Then $G$ \emph{admits infinite chains of
    cup-products in bounded cohomology} if there exists a
  sequence~$(\varphi_n)_{n \in \N} \subset \rbc * G$ of non-zero
  degree such that for all~$n \in \N$ we have
  \[ \varphi_0 \cup \dots \cup \varphi_{n-1} \neq 0 \in \rbc * G.
  \]
\end{defi}

\begin{prop}\label{prop:largegroups}
  Let $G_0$ be a group with~$\lob 3 {G_0} = \infty$ and for each~$n
  \in \N_{>0}$ let $G_n$ be a group with~$\lob 2 {G_n} = \infty$. Let
  $G$ be $\bigoplus_{n \in \N} G_n$ or~$\prod_{n \in \N} G_n$.  Then
  \[ \lob k G
     = 
     \begin{cases}
       1 & \text{if $k=0$}\\
       0 & \text{if $k=1$}\\
       \infty & \text{if $k\in \N_{\geq 2}$}
     \end{cases}
     \quad \text{and}\quad
     \dim_\R \rbc k G
     = 
     \begin{cases}
       1 & \text{if $k=0$}\\
       0 & \text{if $k=1$}\\
       \infty & \text{if $k\in \N_{\geq 2}$}
     \end{cases}
  \]
  for all~$k \in \N$, and thus $\bcd(G) = \infty$. 
  Moreover, $G$ admits infinite chains of cup-products in bounded
  cohomology and for all~$k \in \N_{\geq 4}$ there exist non-trivial
  classes in~$H^k_b(G;\R)$ that decompose as cup-products of classes
  in degree~$2$ and~$3$.
\end{prop}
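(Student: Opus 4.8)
The plan is to deduce everything from the inheritance properties of $\ell^1$-Betti numbers collected in Section~\ref{subsec:l1betti}; the only structural feature of $G \in \{\bigoplus_{n \in \N} G_n, \prod_{n \in \N} G_n\}$ that enters is that for every finite set $S \subset \N$ the subgroup $\prod_{n \in S} G_n$ is a retract of $G$, via the evident inclusion and the coordinate projection (this holds for the direct sum and for the direct product alike), together with the existence of the coordinate projections $p_n \colon G \to G_n$. First I would dispose of the small degrees, which are governed by general facts about groups: one has $\rlhom 0 G \cong \R$ and $\rbc 0 G \cong \R$, so $\lob 0 G = 1$ and $\dim_\R \rbc 0 G = 1$; and $H^1_b(G;\R) \cong 0$ by~\cite{mitsumatsu}, hence $\rbc 1 G \cong 0$ (it is a quotient of $H^1_b(G;\R)$), hence $\bigl(\rlhom 1 G\bigr)' \cong 0$ by the weak duality principle (Proposition~\ref{prop:duality}), hence $\rlhom 1 G \cong 0$ by Hahn--Banach; so $\lob 1 G = 0$ and $\dim_\R \rbc 1 G = 0$.

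Next, for a degree $k \in \N_{\geq 2}$ I would write $k = 2a + 3b$ with $a, b \in \N$, $b \leq 1$ and $a + b \geq 1$ (take $(a,b) := (1,0)$ if $k = 2$, $(0,1)$ if $k = 3$, $(k/2,0)$ if $k \geq 4$ is even, and $((k-3)/2, 1)$ if $k \geq 5$ is odd). The finite product $P_k$ built from $G_1, \dots, G_a$ and, if $b = 1$, additionally $G_0$, is a retract of $G$; iterating the weak K\"unneth inequality (Proposition~\ref{prop:l1kuenneth}) and feeding in the hypotheses $\lob 2 {G_i} = \infty$ and $\lob 3 {G_0} = \infty$ gives $\lob k {P_k} = \infty$, whence $\lob k G \geq \lob k {P_k} = \infty$ by Proposition~\ref{prop:l1inherit}~(3). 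Then Proposition~\ref{prop:l1inherit}~(1) yields $\dim_\R H^k_b(G;\R) \geq \lob k G = \infty$, and the weak duality principle together with the elementary fact that an infinite-dimensional Banach space has infinite-dimensional topological dual yields $\dim_\R \rbc k G = \infty$. Combined with the computations in degrees $0$ and $1$, this gives both displayed tables and in particular $\bcd G = \infty$.

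For the infinite chains of cup-products I would, for each $n \geq 1$, use the weak duality principle and $\lob 2 {G_n} = \infty \neq 0$ to choose $\psi_n \in \rbc 2 {G_n}$ and $\beta_n \in \rlhom 2 {G_n}$ with $\langle \psi_n, \beta_n \rangle = 1$ (exactly as in the proof of Proposition~\ref{prop:l1inherit}~(4)), and set $\varphi_m := \rbc 2 {p_{m+1}}(\psi_{m+1}) \in \rbc 2 G$ for $m \in \N$. Using the relation $\varphi \times \psi = \rbc p {p_G}(\varphi) \cup \rbc q {p_H}(\psi)$ and naturality of the cup- and cross-products, one identifies $\varphi_0 \cup \dots \cup \varphi_{n-1}$ with $\rbc{2n}{\rho}(\psi_1 \times \dots \times \psi_n)$, where $\rho \colon G \to G_1 \times \dots \times G_n$ is the coordinate projection; pairing this against $\rlhom{2n}{\iota}(\beta_1 \times \dots \times \beta_n)$ for the matching inclusion $\iota \colon G_1 \times \dots \times G_n \to G$ and using $\rho \circ \iota = \id$, naturality of the Kronecker product, and $(n-1)$-fold application of Proposition~\ref{prop:crosscomp}, the value works out to $\pm \langle \psi_1, \beta_1 \rangle \cdots \langle \psi_n, \beta_n \rangle = \pm 1 \neq 0$, so $\varphi_0 \cup \dots \cup \varphi_{n-1} \neq 0$ in $\rbc{2n}{G}$ and hence in $H^{2n}_b(G;\R)$. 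For the decomposable classes in degree $k \in \N_{\geq 4}$ I would run the same construction with a decomposition $k = 2a + 3b$, $b \in \{0,1\}$, $a + b \geq 2$ (again $(a,b) = (k/2,0)$ for even $k$ and $((k-3)/2,1)$ for odd $k$), using additionally a dual pair $\psi_0 \in \rbc 3 {G_0}$, $\beta_0 \in \rlhom 3 {G_0}$ with $\langle \psi_0, \beta_0 \rangle = 1$ when $b = 1$ (available since $\lob 3 {G_0} = \infty \neq 0$); the class $\rbc k {p_1}(\psi_1) \cup \dots \cup \rbc k {p_a}(\psi_a)$, cupped with $\rbc k {p_0}(\psi_0)$ if $b = 1$, is then non-zero in $\rbc k G$ by the same pairing computation, hence non-trivial in $H^k_b(G;\R)$, and it is by construction a cup-product of $a$ classes in degree $2$ and $b$ classes in degree $3$.

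There is no deep obstacle here --- the proposition is really an assembly of the lemmas of Section~\ref{subsec:l1betti} --- but three points need care: handling $\bigoplus_{n\in\N} G_n$ and $\prod_{n\in\N} G_n$ uniformly, which is done by restricting attention to finite-product retracts; the elementary observation that every $k \geq 2$, and every $k \geq 4$ with at least two summands, has the form $2a + 3b$ with $b \leq 1$, reflecting that the only primitive non-vanishing $\ell^1$-Betti numbers we can invoke live in degree $2$ (the groups $G_n$) and degree $3$ (the group $G_0$), degree $1$ being forced to vanish; and --- the most error-prone step --- the sign and naturality bookkeeping behind the identity $\bigl\langle \rbc{2n}{\rho}(\psi_1 \times \dots \times \psi_n), \rlhom{2n}{\iota}(\beta_1 \times \dots \times \beta_n) \bigr\rangle = \pm \prod_{j=1}^n \langle \psi_j, \beta_j \rangle$.
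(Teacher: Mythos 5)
Your proposal is correct and follows essentially the same route as the paper's (very terse) proof: degrees $0$ and $1$ from general facts, the decomposition $k=2a+3b$ with $b\in\{0,1\}$ together with finite subproducts as retracts and Propositions~\ref{prop:l1inherit} and~\ref{prop:l1kuenneth} for $k\geq 2$, and the cup-product claims via iterated cross-products, the cross--cup relation, and pairing against $\ell^1$-classes pushed in along the inclusion of the retract. The only blemishes are cosmetic, e.g.\ the degree superscripts in expressions like $\rbc k {p_1}(\psi_1)$, which should read $\rbc 2 {p_1}(\psi_1)$ (and $\rbc 3 {p_0}(\psi_0)$).
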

\begin{proof}
  We only need to consider the case~$k \geq 2$. Every~$k \in \N_{\geq
    2}$ can be written in the form~$k = 2 \cdot r + 3 \cdot s$ with~$r
  \in \N$ and~$s \in \{0,1\}$. Because
  \[ G_1 \times \dots \times G_r 
     \quad\text{and}\quad
     G_0 \times G_1 \times \dots \times G_r
  \]
  are retracts of~$G$, the calculation of
  the dimensions follows from Proposition~\ref{prop:l1inherit} and
  Proposition~\ref{prop:l1kuenneth}. The assertion on the cup-products
  follows from the argumentation via iterated cross-products and the
  relation between the cohomological cross-product and the cup-product
  on bounded cohomology.
\end{proof}

\begin{rem}[exact cardinality]
  If $G_0, G_1, \dots$ are countable groups that satisfy the
  assumptions of Proposition~\ref{prop:largegroups} and $G :=
  \bigoplus_{n \in \N} G_n$, then
  \[ \dim_\R H^k_b(G;\R) = \dim_\R \rbc k G = |\R| 
  \]
  for all~$k \in \N_{\geq 2}$ by Proposition~\ref{prop:largegroups} and
  Proposition~\ref{prop:l1inherit}.

  Furthermore, by taking the infinite free product with the examples by
  Soma~\cite{somanonbanach}, we can also enforce that the difference 
  between reduced and non-reduced bounded cohomology is infinite-dimensional 
  in degree~$3, 5, 6, \dots$.
\end{rem}

\begin{prop}\label{prop:largeprod}
  For each~$n \in \N$ let $G_n$ be a group such that there exists a
  degree~$k_n \in \N_{>1}$ with~$\lob {k_n} {G_n} \neq 0$. Then
  \[ \bcd \Bigl( \bigoplus_{n \in \N} G_n\Bigr) = \infty
     \quad\text{and}\quad
     \bcd \Bigl( \prod_{n \in \N} G_n\Bigr) = \infty,
  \]
  and $\bigoplus_{n \in \N} G_n$ and $\prod_{n \in \N} G_n$ admit infinite 
  chains of cup-products in bounded cohomology.
\end{prop}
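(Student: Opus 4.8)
The plan is to reduce Proposition~\ref{prop:largeprod} to Proposition~\ref{prop:largegroups} by a regrouping argument. The key observation is that the hypothesis only gives, for each~$n$, \emph{some} degree~$k_n \geq 2$ with~$\lob{k_n}{G_n} \neq 0$, whereas Proposition~\ref{prop:largegroups} needs infinitely many factors contributing in degree~$2$ and one factor contributing in degree~$3$. To bridge this gap I would first pass from a single non-trivial class to infinitely many: by Proposition~\ref{prop:l1inherit}\enumref{prop:l1inherit}, part~(4), if $\lob{k_n}{G_n}\neq 0$ then $\lob{k_n}{\bigfree_\N G_n} = \infty$, and $\bigfree_\N G_n$ is a retract of~$\bigoplus_\N G_n$ (via the fold map), so in fact $\lob{k_n}{\bigoplus_\N G_n} = \infty$ already, and likewise for the direct product. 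Hence, replacing each $G_n$ by a countable free product or by $\bigoplus_\N G_n$, we may assume $\lob{k_n}{G_n} = \infty$ for all~$n$ with $k_n \geq 2$.

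Next I would arrange the degrees. For each fixed~$m \in \N_{\geq 2}$ I want a factor (built from the $G_n$) with infinite $\ell^1$-Betti number in degree~$m$. This is where Proposition~\ref{prop:l1kuenneth} enters: a finite product $G_{n_1} \times \dots \times G_{n_j}$ has $\lob{k_{n_1} + \dots + k_{n_j}}{\,\cdot\,} = \infty$, so by combining copies of the available $G_n$'s I can hit any sufficiently large degree; in particular, since every $k_n \geq 2$, products of $r$ copies realize degree $2r$ and, if some $k_{n_0}$ is odd, $2r + k_{n_0}$ realizes odd degrees as well. The cleanest route is: from the rearranged family extract a countable family $(H_m)_{m \in \N}$ of groups, each a finite product of (free products of) the $G_n$'s, such that $\lob{2}{H_m} = \infty$ for $m \geq 1$ and $\lob{3}{H_0} = \infty$ — this is possible because some $G_n$ has a class in degree exactly $2$ (if all $k_n = 2$) or we build degree $2$ and degree $3$ from the given $k_n$'s by the Künneth-type inequality, using $k_n \geq 2$ to guarantee degree $2$ is reachable by a single factor after free-product amplification, or degree $3$ from one odd-degree piece tensored down. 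Then $\bigoplus_{m} H_m$ (resp.\ $\prod_m H_m$) is a retract of $\bigoplus_{n} G_n$ (resp.\ $\prod_n G_n$), so by Proposition~\ref{prop:l1inherit}\enumref{prop:l1inherit}, part~(3), every $\ell^1$-Betti number of the former bounds that of the latter from below, and Proposition~\ref{prop:largegroups} applied to $(H_m)_m$ gives $\bcd(\bigoplus_m H_m) = \infty$ together with infinite chains of cup-products, which pull back along the retraction to $\bigoplus_n G_n$ (resp.\ $\prod_n G_n$) since retractions induce split injections on bounded cohomology compatible with cup-products.

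The one genuinely delicate point is the case $k_n = 2$ for all~$n$: then Proposition~\ref{prop:largegroups} requires a degree-$3$ contribution which is not directly available. The remedy is to note that $\lob{2}{G_n} = \infty$ forces $H_b^2(G_n;\R)$ to be infinite-dimensional, but to get degree~$3$ we instead use that the product $G_{n_1} \times G_{n_2}$ gives $\lob{4}{\,\cdot\,} = \infty$, not~$3$; so in the all-even case I would simply invoke the even-degree part of Proposition~\ref{prop:largegroups} separately — more precisely, re-examine its proof, which writes $k = 2r + 3s$ with $s \in \{0,1\}$: if no odd-degree class is available we only get the even degrees, but we can still squeeze out the odd ones by throwing in one copy of a free group $F_2$ (or of Higman's group), for which $\lob{3}{\,\cdot\,} \neq 0$ by the known lower bounds $\bcd F \geq 3$ cited in the introduction together with part~(2) of Proposition~\ref{prop:l1inherit} in the relevant degree. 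Thus, I would allow enlarging the family by a single fixed group with a degree-$3$ $\ell^1$-homology class; this does not change any of the $\bigoplus$/$\prod$ being a retract of the enlarged one in a way that matters, because adding a free factor (resp.\ a direct factor) to a retract is again a retract. The rest is bookkeeping: assemble $(H_m)_m$, apply Proposition~\ref{prop:largegroups}, and transport the conclusion back along the retraction. The main obstacle is precisely this degree-$3$ issue in the all-even case; everything else is the retract-plus-Künneth mechanism already set up in Section~\ref{subsec:l1betti}.
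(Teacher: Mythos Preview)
Your plan proves something strictly stronger than what the proposition asserts, and the extra strength is exactly where the difficulties you run into come from. The statement only asks for~$\bcd G = \infty$ and for the existence of one sequence~$(\varphi_n)_n$ whose finite cup-products are all non-zero; it does \emph{not} ask that $\rbc k G \neq 0$ for \emph{every}~$k \geq 2$. So there is no need to manufacture a degree-$3$ class at all. The paper's argument is direct: for each~$r$, the finite product~$G_0 \times \dots \times G_{r-1}$ is a retract of~$G$, so by Proposition~\ref{prop:l1inherit}(3) and Proposition~\ref{prop:l1kuenneth} one has
\[
   \lob{k_0 + \dots + k_{r-1}}{G}
   \geq \lob{k_0 + \dots + k_{r-1}}{G_0 \times \dots \times G_{r-1}}
   \geq \prod_{j<r} \lob{k_j}{G_j} > 0,
\]
and since $k_0 + \dots + k_{r-1} \geq 2r \to \infty$ this already gives~$\bcd G = \infty$. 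The infinite chain of cup-products comes from pulling back, along the projections~$p_n \colon G \to G_n$, classes~$\varphi_n \in \rbc{k_n}{G_n}$ that pair non-trivially with some~$\alpha_n \in \rlhom{k_n}{G_n}$; the relation $\varphi \times \psi = p_G^*\varphi \cup p_H^*\psi$ at the end of Section~\ref{subsec:product} together with the pairing computation in the proof of Proposition~\ref{prop:l1kuenneth} shows that every finite cup-product of the pullbacks is non-zero. This is what the paper means by ``follows inductively from Proposition~\ref{prop:l1inherit} and Proposition~\ref{prop:l1kuenneth}''.

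Your proposed remedy in the all-even case does not work. If you adjoin an outside group (such as~$F_2$) as an extra factor~$H_0$, then $\bigoplus_m H_m$ is no longer a retract of~$\bigoplus_n G_n$, so you cannot transport the conclusion back. Moreover, Proposition~\ref{prop:l1inherit}(2) is stated only in degree~$2$; it does not let you pass from~$\bcd F_2 \geq 3$ to~$\lob 3 {F_2} \neq 0$. Finally, the assertion that $\bigfree_\N G$ is a retract of~$\bigoplus_\N G$ ``via the fold map'' is false in general (take~$G = \Z$: the free group~$F_\infty$ is non-abelian and hence not a quotient, let alone a retract, of~$\Z^{(\N)}$).
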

\begin{proof}
  Similarly to the proof of Proposition~\ref{prop:largegroups} this
  follows inductively from Proposition~\ref{prop:l1inherit} and
  Proposition~\ref{prop:l1kuenneth}.
\end{proof}

We can now also easily deduce a proof for Proposition~\ref{prop:large}:

\begin{proof}[Proof of Proposition~\ref{prop:large}]
  Because $H^2_b(G_n;\R) \not \cong 0$, we know that $\lob 2 G \neq 0$
  (Proposition~\ref{prop:l1inherit}). Therefore,
  Proposition~\ref{prop:largeprod} provides the desired conclusion.
\end{proof}

Some concrete groups with large $\ell^1$-Betti numbers or large
bounded-cohomological dimension are:

\begin{exa}
  Let $n \in \N_{\geq 2}$ and let $(M_k)_{k \in \N}$ be a sequence of
  oriented closed connected $n$-manifolds with positive simplicial
  volume, e.g., hyperbolic manifolds~\cite{vbc,thurston}. Then $\lob n
  {\pi_1(M_k)} \neq 0$~\cite{vbc}, and so
  Proposition~\ref{prop:l1inherit} shows that
  \[ \lob[Big] n {\bigfree_{k \in \N} \pi_1(M_k)} 
     = \infty.
  \]
  Moreover, Proposition~\ref{prop:largeprod} tells us that
  \[ \bcd \Bigl(\bigoplus_{k \in \N} \pi_1(M_k)\Bigr) = \infty
     \quad\text{and}\quad
     \bcd \Bigl(\prod_{k \in \N} \pi_1(M_k)\Bigr) = \infty.
  \]
\end{exa}

\begin{exa}
  It is well known that $\lob 2 {F_2} = \infty$~\cite{mitsumatsu}.  If
  $M$ is an oriented closed connected hyperbolic $3$-manifold, then
  $\lob 3 {\pi_1(M)} \neq 0$ (as in the previous example). Let $H :=
  \bigfree_\N \pi_1(M)$.  Hence, we have $\lob 3 {H} = \infty$ by
  Proposition~\ref{prop:l1inherit}. So,
  Proposition~\ref{prop:largegroups} allows us to compute the size of
  (reduced) bounded cohomology and (reduced) $\ell^1$-homology of~$H
  \times \bigoplus_\N F_2$ in all degrees. In particular,
  \[ \bcd \Bigl( H \times \bigoplus_\N F_2\Bigr) = \infty.\]
  Furthermore, by Proposition~\ref{prop:largeprod},
  \[ \bcd \Bigl( \bigoplus_\N F_2 \Bigr) = \infty
     \quad\text{and}\quad
     \bcd \Bigl( \prod_\N F_2 \Bigr) = \infty
     , 
  \]
  but because the exact structure of~$H^*_b(F_2;\R)$ is unknown, 
  it is currently out of reach to calculate the bounded cohomology 
  ring of~$\bigoplus_\N F_2$ or~$\prod_\N F_2$ completely.
\end{exa}

If we are not interested in having many non-trivial cup-products, then
we can also take large free products:

\begin{exa}
  For~$n \in \N$ let $G_n$ be a group with~$\lob {k_n} {G_n} \neq 0$ for
  some~$k_n \geq n$; e.g., we could take the fundamental group of an
  oriented closed connected hyperbolic $n$-manifold of dimension at
  least~$n$. Then Proposition~\ref{prop:l1inherit} shows that
  \[ \bcd \Bigl(\bigfree_{n \in \N} G_n \Bigr) = \infty. 
  \]
\end{exa}

\section{Groups with small bounded cohomology}\label{sec:small}

We will first recall the notion of mitotic groups and their basic
properties (Section~\ref{subsec:mitoticgroups}).  We will prove
Theorem~\ref{thm:small} in Section~\ref{subsec:proofsmall}, i.e., that
mitotic groups have bounded-cohomological dimension equal to~$0$. As a
preparation for this proof, we recall the uniform boundary condition
in Section~\ref{subsec:ubc}.

\subsection{Mitotic groups}\label{subsec:mitoticgroups}

We recall the notion of mitotic groups, due to Baumslag, Dyer,
Heller~\cite[Section~4]{bdh}.  Roughly speaking, a mitosis of a
group~$G$ is an ambient group that allows to divide~$G$ into two
copies of itself by means of conjugation
(Figure~\ref{fig:mitosis}). For group elements~$g, h$ we use the
conjugation notation
$g^h := h \cdot g \cdot h^{-1}. 
$

\begin{defi}[mitotic group]
  Let $G$ be a subgroup of a group~$M$. Then $M$ is a \emph{mitosis of~$G$} 
  if there exist~$s, d \in M$ with the following properties:
  \begin{enumerate}
    \item The group~$M$ is generated by~$G \cup \{s, d\}$.
    \item For all~$g \in G$ we have~$g^d = g \cdot g^s$.
    \item For all~$g,g' \in G$ we have~$[g', g^s] = 1$.
  \end{enumerate}
  We then also call the inclusion~$G \hookrightarrow M$ a mitosis 
  and the elements~$d,s$ as above are \emph{witnesses} for this mitosis. 
  A group~$M$ is \emph{mitotic}, if for every finitely generated
  subgroup~$G \subset M$ there exists a subgroup~$M' \subset M$ such
  that $G \subset M'$ is a mitosis of~$G$.
\end{defi}

\begin{figure}
  \begin{center}
    \begin{tikzpicture}[x=0.8cm,y=0.8cm]
      \draw (0,0) circle (0.5);
      \draw (0,0) node {$G$};
      \draw (-1,-2) circle (0.5);
      \draw (-1,-2) node {$G$};
      \draw (1,-2) circle (0.5);
      \draw (1,-2) node {$G$};
      \begin{scope}[shift={(0,-1.3)}]
        \draw (0,0) -- (0,0.5);
        \draw[->] (0,0) -- (210:0.5);
        \draw[->] (0,0) -- (330:0.5);
      \end{scope}
      \draw (-1,-1) node[anchor=east] {conjugation by~$d$}; 
      \draw (1,-1) node[anchor=west] {conjugation by~$s$}; 
    \end{tikzpicture}
  \end{center}
  \caption{A mitosis of a group, schematically; the original group~$G$ commutes with~$G^s$ inside~$M$ and $g$ commutes with~$g^d$ for all~$g \in G$.} 
  \label{fig:mitosis}
\end{figure}
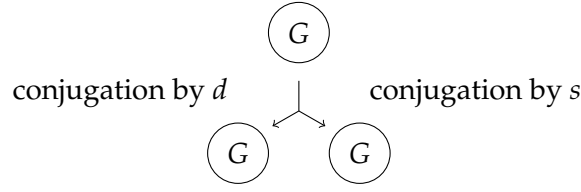

If $M$ is a mitosis of a group~$G$, witnessed by~$s, d \in M$, then
\begin{align*}
  \mu \colon G \times G & \longrightarrow M \\
  (g',g) & \longmapsto g' \cdot g^s
\end{align*}
is a well-defined group homomorphism; if $\Delta_G \colon G
\longrightarrow G \times G$ denotes the diagonal, then $\mu \circ
\Delta_G$ is nothing but conjugation with~$d$. Using the K\"unneth
theorem, the fact that conjugations act trivially on homology, and an 
induction argument, Baumslag, Dyer, Heller~\cite[Theorem~4.2]{bdh}
established that mitotic groups are acyclic:

\begin{thm}\label{thm:bdh}
  All mitotic groups are acyclic.
\end{thm}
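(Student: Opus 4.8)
The plan is to reduce to finitely generated subgroups and then exploit the homomorphism $\mu$ together with the fact that conjugation acts trivially on homology. Since integral group homology commutes with filtered colimits and every mitotic group $M$ is the directed union of its finitely generated subgroups, one has $H_n(M;\Z)\cong\varinjlim_G H_n(G;\Z)$, the colimit ranging over the finitely generated subgroups $G\subseteq M$. Hence it suffices to show, for each finitely generated $G\subseteq M$ and each $n\in\N_{>0}$, that the structure map $H_n(G;\Z)\to H_n(M;\Z)$ vanishes; as this map factors through any intermediate subgroup, it is enough to produce a mitosis $G\subseteq M'\subseteq M$ and prove that the inclusion induces the zero map $i_*\colon \widetilde H_n(G;\Z)\to \widetilde H_n(M';\Z)$ on reduced homology.

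Fix a mitosis $G\subseteq M'$ with witnesses $s,d$ and consider the homomorphism $\mu\colon G\times G\to M'$, $(g',g)\mapsto g'\cdot g^s$. First I would record three identities on homology, all consequences of the mitosis axioms and of the triviality of inner automorphisms on homology: writing $j_1,j_2\colon G\to G\times G$ for the two axis inclusions and $\Delta_G$ for the diagonal, one has $\mu\circ j_1 = i$, while $\mu\circ j_2$ is conjugation by~$s$ followed by~$i$ and $\mu\circ\Delta_G$ is conjugation by~$d$ followed by~$i$ (using axiom~(2), which gives $g\cdot g^s = g^d$); since conjugations act as the identity on homology, all three composites induce $i_*$ on $H_*(\args;\Z)$.

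Next I would feed the diagonal into the K\"unneth theorem. Over~$\Z$ the class $\Delta_{G*}(x)$ of $x\in H_n(G;\Z)$ decomposes as $(j_1)_*(x)+(j_2)_*(x)$ plus a \emph{decomposable} remainder, namely a sum of cross-products $x'\times x''$ of classes of strictly positive degree~$<n$, together with the K\"unneth $\mathrm{Tor}$-contributions built from homology in degrees~$<n$. Applying $\mu_*$ and using the three identities turns the primitive part into $i_*(x)+i_*(x)$, whence
\[
  i_*(x) \;=\; -\,\mu_*\bigl(\text{decomposable part of }\Delta_{G*}(x)\bigr).
\]
I would then run an induction on~$n$. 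The cases $n=0,1$ are immediate, since there are no decomposables of total degree~$\le 1$ with positive-degree factors, so the displayed relation reads $i_*(x)=2\,i_*(x)$ and forces $i_*(x)=0$. For the inductive step one must show that $\mu_*$ annihilates every decomposable of total degree~$n$, using that $i_*$ already vanishes in all degrees below~$n$.

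The main obstacle is exactly this last point: controlling the mixed cross-terms $\mu_*(x'\times x'')$. Unlike the primitive terms these are not directly expressible through~$i_*$, because $M'$ is non-abelian and there is no Pontryagin product on $H_*(M';\Z)$ to factor them through; one has to exploit that $\mu$ restricted to the commuting subgroups $G$ and $G^s$ is genuine multiplication (this is where axiom~(3) enters) and feed the inductive hypothesis---vanishing of $i_*$ on the lower-degree factors---into this product structure, as carried out by Baumslag, Dyer, and Heller. The integral $\mathrm{Tor}$-summands are handled by the same induction, since they too are assembled from homology in degrees~$<n$; alternatively one may first prove the statement with field coefficients, where K\"unneth is a clean tensor product, and afterwards pass to $\Z$-coefficients. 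Once the decomposables are killed, the relation yields $i_*=0$ on $\widetilde H_n(G;\Z)$ for all~$n$, and the colimit reduction gives $H_n(M;\Z)\cong0$ for all $n\in\N_{>0}$, i.e.\ $M$ is acyclic.
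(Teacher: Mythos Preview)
Your reduction to finitely generated subgroups via the colimit is correct, and the identification of $\mu\circ j_1$, $\mu\circ j_2$, $\mu\circ\Delta_G$ as all inducing $i_*$ on homology is exactly the heart of the matter. The gap is in the inductive step for a \emph{single} mitosis $G\hookrightarrow M'$: knowing that $i_*\colon H_k(G)\to H_k(M')$ vanishes for $k<n$ does not let you conclude $\mu_*(x'\times x'')=0$. The inductive hypothesis only says the lower-degree classes die \emph{after} pushing to $M'$, but $\mu$ is not of product form (its target is $M'$, not $M'\times M'$), so there is no naturality square carrying $i_*(x')=0$ into a statement about $\mu_*(x'\times x'')$. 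The commutativity of $G$ and $G^s$ in $M'$ (axiom~(3)) does not manufacture a Pontryagin product on $H_*(M')$ through which $\mu_*(x'\times x'')$ would factor as $i_*(x')\cdot(\gamma_s i)_*(x'')$. So the sentence ``feed the inductive hypothesis \dots\ into this product structure, as carried out by Baumslag, Dyer, and Heller'' does not describe what they actually do.

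What Baumslag--Dyer--Heller prove (and what the paper's Appendix~\ref{appx:proof} reproduces in the normed setting) requires a \emph{chain} of mitoses. Their key step \cite[Proposition~4.1]{bdh} is: if $\varphi\colon H\to G$ already satisfies $H_k(\varphi)=0$ for $1\le k\le q-1$ and $i\colon G\hookrightarrow M'$ is a mitosis, then $H_q(i\circ\varphi)=0$. The point is that after precomposing with such a $\varphi$, the K\"unneth decomposition of $(\varphi\times\varphi)_*\Delta_{H*}(x)$ has \emph{no} mixed terms at all---the lower-degree factors die already in $H_*(G)$, \emph{before} $\mu_*$ is applied---so only the primitive part survives and the equation $(i\varphi)_*(x)=2\,(i\varphi)_*(x)$ holds on the nose. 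Iterating, a chain $G_0\subset G_1\subset\cdots\subset G_q$ of mitoses inside $M$ has composite $G_0\to G_q$ killing $H_k$ for all $k\le q$; since a mitotic group contains such chains of arbitrary length over every finitely generated subgroup, your colimit argument then finishes. The fix to your outline is therefore to induct on the length of a chain of mitoses, not on the degree for a fixed single mitosis.
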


In view of the universal coefficient theorem, we obtain that also
group cohomology with $\R$-coefficients is trivial for mitotic groups.

\subsection{The uniform boundary condition}\label{subsec:ubc}

We we will now review the uniform boundary condition, as studied by
Matsumoto and Morita~\cite{mm}. 

\begin{defi}[uniform boundary condition]
  Let $q \in \N$ and let $\kappa \in \R_{>0}$. A group~$G$ satisfies 
  the \emph{$(q,\kappa)$-uniform boundary condition ($(q,\kappa)$-UBC)} 
  if the following holds: for all~$z \in \im \partial_{q+1} \subset C_q(G;\R)$ 
  there is a chain~$c \in C_{q+1}(G;\R)$ with
  \[ \partial_{q+1} (c) = z 
     \quad\text{and}\quad
     \| c\|_1 \leq \kappa \cdot \|z\|_1.
  \]
  A group~$G$ satisfies~\emph{$q$-UBC} if there exists a~$\kappa \in \R_{>0}$ 
  such that $G$ satisfies~$(q,\kappa)$-UBC.
\end{defi}

For example, the uniform boundary condition allows to upgrade acyclicity 
of a group to vanishing of bounded cohomology~\cite[Theorem~2.8]{mm}:

\begin{thm}\label{thm:ubc}
  Let $G$ be a group and let $q \in \N$. Then the following are 
  equivalent:
  \begin{enumerate}
    \item The group~$G$ satisfies~$q$-UBC.
    \item The comparison map~$H_b^{q+1}(G;\R) \longrightarrow H^{q+1}(G;\R)$ 
      is injective.
  \end{enumerate}
  In particular: If $G$ is acyclic and $G$ satisfies $q$-UBC, then
  $H_b^{q+1}(G;\R) \cong 0$.
\end{thm}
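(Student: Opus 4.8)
The final displayed statement is Theorem~\ref{thm:ubc} (the equivalence between $q$-UBC and injectivity of the comparison map $H_b^{q+1}(G;\R)\to H^{q+1}(G;\R)$). The excerpt attributes it to Matsumoto--Morita \cite[Theorem~2.8]{mm}. Here is how I would prove it.

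\medskip

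The plan is to work directly with the (co)chain-level description and unravel both conditions. First I would fix notation: write $Z_q=\ker\partial_q\subset C_q(G;\R)$, $B_q=\im\partial_{q+1}$, and similarly $Z^q_b,B^q_b$ for the bounded cochain complex $C^*_b(G;\R)$; note $C^*_b(G;\R)=C_*(G;\R)'$ is literally the topological dual of the normed complex $C_*(G;\R)$. The comparison map in degree $q+1$ is induced by the inclusion of $C^{q+1}_b$ into the full cochain group $C^{q+1}=C_{q+1}(G;\R)^*$ (algebraic dual). A bounded cocycle $f\in Z^{q+1}_b$ maps to zero in $H^{q+1}(G;\R)$ iff $f=\delta g$ for some (not necessarily bounded) $g\in C^q$, i.e. iff $f$ \emph{bounded} is an algebraic coboundary. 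So injectivity of the comparison map in degree $q+1$ is exactly the statement: every bounded cocycle that is an algebraic coboundary is also a \emph{bounded} coboundary, i.e. $Z^{q+1}_b\cap \delta(C^q)=\delta(C^q_b)$ — equivalently, $\delta\colon C^q_b\to Z^{q+1}_b$ has image equal to the full algebraic-coboundary-meets-$C^{q+1}_b$ subspace.

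\medskip

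For the implication $(1)\Rightarrow(2)$: assume $(q,\kappa)$-UBC and let $f\in Z^{q+1}_b$ with $f=\delta g$, $g\in C^q$. I want to replace $g$ by a \emph{bounded} primitive. The trick (this is where UBC enters) is a closed-graph / direct-sum-of-normed-spaces argument: consider the surjection $\partial_{q+1}\colon C_{q+1}(G;\R)\twoheadrightarrow B_q$; UBC says it admits a bounded set-theoretic section $s$ (choose for each $z$ a primitive $c$ with $\|c\|_1\le\kappa\|z\|_1$, though one should be slightly careful and instead use that $\partial_{q+1}$ restricted to a suitable complement, or just the quotient norm estimate, gives $\|z\|_{\mathrm{quot}}\ge \kappa^{-1}\|z\|_1$ on $B_q$). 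The point: on $B_q$ the $\ell^1$-norm and the quotient norm $C_{q+1}/Z_{q+1}$ are equivalent. Now $f\in Z^{q+1}_b$ vanishes on $Z_{q+1}$ (since $f=\delta g$ means $f(\partial c)=g(\partial\partial c)=0$, wait — $f=\delta g$ gives $f(c)=g(\partial c)$ for $c\in C_{q+1}$, so $f$ kills $Z_{q+1}$), hence $f$ descends to a linear functional $\bar f$ on $C_{q+1}/Z_{q+1}=B_q$ (the iso via $\partial_{q+1}$), and $f$ bounded plus norm-equivalence on $B_q$ shows $\bar f$ is bounded for the $\ell^1$-norm on $B_q$. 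Extend $\bar f$, by Hahn--Banach, to a bounded functional $\tilde g\in C_q(G;\R)'=C^q_b(G;\R)$ on all of $C_q(G;\R)$. Then $\delta\tilde g$ and $f$ agree on $C_{q+1}$ (both equal $\bar f\circ\partial_{q+1}$), so $f=\delta\tilde g$ with $\tilde g$ bounded, proving $[f]=0$ in $H^{q+1}_b$.

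\medskip

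For $(2)\Rightarrow(1)$, I would argue contrapositively and quantitatively. Suppose $q$-UBC fails: there is a sequence $z_n\in B_q$ with $\|z_n\|_1=1$ but every primitive $c$ of $z_n$ has $\|c\|_1\ge n$; equivalently the quotient norm of $z_n$ in $C_{q+1}/Z_{q+1}$ tends to $0$ while its $\ell^1$-norm stays $1$, so the $\ell^1$-norm is \emph{not} dominated by the quotient norm on $B_q$. Using Hahn--Banach separation one produces a functional on $B_q$ that is bounded for the quotient norm (hence extends through $\partial_{q+1}$ to a \emph{bounded} cochain $g$ with — being pulled back through $\partial$ — the property that $\delta g$ is bounded, since $\delta g(c)=g(\partial c)$ and $g$ bounded composed with the operator-bounded $\partial$) but whose associated cocycle $f=\delta g$ is \emph{not} a bounded coboundary. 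Concretely: $f=\delta g$ is by construction an algebraic coboundary and bounded, so if the comparison map were injective in degree $q+1$, $f$ would be a bounded coboundary $f=\delta h$, $h\in C^q_b$; then $g-h$ vanishes on $B_q=\im\partial_{q+1}$, i.e. $g-h$ is a bounded cocycle in degree $q$... this forces, tracing back, that $g$ itself was bounded-cohomologous to something that, evaluated against the $z_n$, contradicts the unboundedness we engineered. The main obstacle — and the step I'd spend the most care on — is this $(2)\Rightarrow(1)$ direction: making the Hahn--Banach construction of the "bad" cocycle precise and verifying it genuinely represents a nonzero class in the kernel of the comparison map, i.e. that no bounded correction can fix it. The rest is soft functional analysis (duality, Hahn--Banach, equivalence of norms on $B_q$ under UBC). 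The final ``in particular'' is immediate: if $G$ is acyclic then $H^{q+1}(G;\R)\cong 0$, so injectivity of the comparison map forces $H^{q+1}_b(G;\R)\cong 0$.
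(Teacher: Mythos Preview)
The paper does not prove Theorem~\ref{thm:ubc}; it is quoted from Matsumoto--Morita~\cite[Theorem~2.8]{mm} without argument, so there is no in-paper proof to compare against. Your proposal is an independent reconstruction.

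Your reformulation is correct and is the right way to organise the proof: writing $\|z\|_{\mathrm{quot}}:=\inf\{\|c\|_1:\partial_{q+1}c=z\}$ on $B_q=\im\partial_{q+1}$, condition~(1) says $\|\cdot\|_1$ and $\|\cdot\|_{\mathrm{quot}}$ are equivalent on~$B_q$, while condition~(2) says every linear functional on~$B_q$ bounded for~$\|\cdot\|_{\mathrm{quot}}$ is also bounded for~$\|\cdot\|_1$. Your $(1)\Rightarrow(2)$ via Hahn--Banach is fine.

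For $(2)\Rightarrow(1)$ there is a slip and a genuine gap. The slip: failure of UBC gives $z_n\in B_q$ with $\|z_n\|_1=1$ and $\|z_n\|_{\mathrm{quot}}\to\infty$, not $\to 0$; one always has $\|z\|_1\le(q+1)\,\|z\|_{\mathrm{quot}}$, so it is the quotient norm that can blow up, and the inequality you need to violate is the reverse one. The gap: you assert that ``Hahn--Banach separation'' produces a functional on~$B_q$ bounded for the quotient norm but not for the $\ell^1$-norm, but this is precisely the nontrivial point and does not follow from Hahn--Banach alone (Hahn--Banach on $(B_q,\|\cdot\|_1)$ only produces $\ell^1$-bounded functionals). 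What closes it is a completeness argument on the dual side: the inclusion $(B_q,\|\cdot\|_1)'\hookrightarrow(B_q,\|\cdot\|_{\mathrm{quot}})'$ is a bounded linear map between Banach spaces (duals of normed spaces are always complete); if it were onto, the open mapping theorem would make the dual norms equivalent, and then the Hahn--Banach formula $\|z\|=\sup\{|\bar f(z)|:\|\bar f\|_*\le 1\}$ would recover equivalence of the original norms on~$B_q$, i.e.\ UBC, a contradiction. Any $\bar f$ outside the image then yields the bounded cocycle $f=\bar f\circ\partial_{q+1}\in C_b^{q+1}(G;\R)$ that is an algebraic coboundary but not a bounded one, and your ``no bounded correction can fix it'' paragraph goes through verbatim. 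The ``in particular'' is immediate once the equivalence is in place.
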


More geometrically, the uniform boundary condition also has
applications in the context of simplicial volume of non-compact
manifolds~\cite{loehsauer}.

We introduce the following version of the uniform 
boundary condition:

\begin{defi}[uniform boundary condition]
  Let $q \in \N$, $\kappa \in \R_{>0}$. A group homomorphism~$\varphi
  \colon H \longrightarrow K$ satisfies the \emph{$(q,\kappa)$-uniform
    boundary condition ($(q,\kappa)$-UBC)} if there exists a linear
  map
  \[ S \colon \partial_{q+1}\bigl(C_{q+1}(H;\R)\bigr)
  \longrightarrow C_{q+1}(K;\R)\]  
  with
  \[ \partial_{q+ 1} \circ S = C_q(\varphi;\R)|_{\im \partial_{q+1}} 
     \quad\text{and}\quad
     \|S\| \leq \kappa.
  \]
  Here, $\|S\|$ denotes the norm of~$S$ with respect to the
  restricition of the \mbox{$\ell^1$-norm} to~$\partial_{q+1}(C_{q+1}(H;\R))$ and the
  $\ell^1$-norm on~$C_{q+1}(K;\R)$.
\end{defi}

Clearly, every group homomorphism satisfies~$(0,0)$-UBC.

\subsection{Bounded cohomology of mitotic groups}\label{subsec:proofsmall}

We will now prove Theorem~\ref{thm:small}, i.e., that mitotic groups have 
trivial bounded cohomology. The proofs of Baumslag, Dyer, Heller and 
the normed refinement of Matsumoto and Morita of Mather's argument for~$\HK n$ 
serve as a blueprint. 

In view of Theorem~\ref{thm:ubc} and Theorem~\ref{thm:bdh} we only
need to show that mitotic groups satisfy the uniform boundary
condition in each positive degree. To this end, we first prove that
mitoses allow to increase the degree in which the uniform boundary
condition is satisfied. More precisely, following the arguments of
Matsumoto and Morita~\cite{mm} step by step, one obtains the following
(a detailed proof is given in Appendix~\ref{appx:proof}):

\begin{prop}\label{prop:ubcmitosis}
  Let $q \in \N$, $\kappa \in \R_{>0}$. Then there is a
  constant~$c_{q,\kappa} \in \R_{>0}$ such that: let
  \[ \xymatrix{%
       H \ar[r]^\varphi 
       & H' \ar[r]^{\varphi'}
       & K \ar[r]^\psi
       & G \ar[r]^i 
       & M
    }
  \]
  be a chain of group homomorphisms with the following properties:
  \begin{itemize}
    \item The homomorphism~$i \colon G \hookrightarrow M$ is a mitosis.
    \item For all~$k \in \{1,\dots, q-1\}$ we have~$H_k(\varphi';\R) = 0$. 
    \item For all~$k \in \{0,\dots, q-1\}$ the group homomorphisms~$\varphi
      \colon H \longrightarrow H'$ and $\psi \colon K \longrightarrow
      G$ satisfy~$(k,\kappa)$-UBC.
  \end{itemize}
  Then for all~$k \in \{1,\dots,q\}$ we obtain
  \[ H_k(i \circ \psi \circ \circ \varphi' \circ \varphi;\R) = 0 
  \]
  and the composition $i \circ \psi \circ \varphi' \circ \varphi$ satisfies 
  $(k, c_{q,\kappa})$-UBC for all~$k \in \{0,\dots,q\}$.
\end{prop}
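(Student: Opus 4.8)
The plan is to mimic, step by step, the normed refinement of Mather's argument carried out by Matsumoto and Morita~\cite{mm}, but replacing the concrete homeomorphism groups by the abstract data of a mitosis. The key structural object is the homomorphism~$\mu\colon G\times G\to M$, $(g',g)\mapsto g'\cdot g^s$, together with the fact (recorded after Theorem~\ref{thm:bdh}) that $\mu\circ\Delta_G$ equals conjugation by~$d$ in~$M$. Since conjugation induces the identity on homology of~$M$, on the level of $\ell^1$-chains the maps $C_*(\mu\circ\Delta_G;\R)$ and $C_*(\mathrm{incl}_{G\hookrightarrow M};\R)$ are chain homotopic via a \emph{uniformly bounded} chain homotopy — this boundedness is what makes the argument work in the normed setting, and it follows from the explicit (acyclic-model-type) homotopy for conjugation, whose norm in degree~$k$ depends only on~$k$. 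Combining this with the Eilenberg--Zilber/Alexander--Whitney chain homotopies of Lemma~\ref{lem:awez} (again uniformly bounded) lets me compare, on normed chains, the map induced by $g\mapsto g^s$ restricted to~$G$ with the difference of the inclusion and a diagonal-type map.

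First I would fix a cycle or, more precisely, a boundary $z\in\partial_{q+1}C_{q+1}(H;\R)$ that we wish to fill efficiently after pushing forward along $i\circ\psi\circ\varphi'\circ\varphi$. Using the hypothesis that $\varphi$ and $\psi$ satisfy $(k,\kappa)$-UBC for $k\le q-1$, and that $\varphi'$ kills homology in degrees $1,\dots,q-1$, I would first move the problem into~$G$: pushing $z$ forward along $\varphi$ produces a controlled filling in~$H'$; the homological triviality of $\varphi'$ in the relevant degrees, combined with the uniform-boundedness of the chain homotopies realizing that triviality (obtained from acyclic models exactly as in Lemma~\ref{lem:awez}), lets me replace cycles in $H'$ by explicit boundaries in $K$ with norm control; then $(k,\kappa)$-UBC for $\psi$ pushes everything into~$G$ with a constant depending only on $q$ and $\kappa$. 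The upshot of this bookkeeping is: after composing up to~$G$, every $(q{-}1)$-boundary (hence, by the same induction from degree~$0$, every lower-degree boundary) admits a filling of norm $\le$ (constant)$\cdot\|z\|_1$, and the pushed-forward cycles in degrees $1,\dots,q-1$ become boundaries in~$G$ with uniform control; this is the analogue of the "$q-1$ already understood, deduce degree $q$" inductive step in~\cite{mm}.

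Now comes the mitosis step, which raises the degree by one. Let $\bar z\in C_q(G;\R)$ be the image in~$G$; it is a cycle (its class dies in $H_q(M;\R)$ by Theorem~\ref{thm:bdh}, but I want a \emph{chain-level} filling in~$M$ of controlled norm, not merely vanishing of the class). I would write, on normed chains in~$M$,
\[
  C_q(i;\R)(\bar z)
  \;=\; C_q(\mu;\R)\bigl(\bar z\times [\,]_0 + [\,]_0\times\bar z\bigr) + \partial(\text{bounded terms}) + (\text{bounded terms})\partial,
\]
i.e.\ decompose the inclusion-pushforward into the part coming from the first $G$-factor of $\mu$ (the "identity" copy), the part coming from the second factor $g\mapsto g^s$, and a uniformly bounded correction arising from the Alexander--Whitney homotopy $\Xi$ of Lemma~\ref{lem:awez} applied to the product $G\times G$. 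The relation $g^d=g\cdot g^s$ together with conjugation-invariance of homology in~$M$ forces the $s$-copy to be, up to uniformly bounded boundaries, the \emph{negative} of the identity copy; this is the precise place where axioms (2) and (3) of a mitosis are used, and it is the abstract shadow of the "doubling" trick in Mather's proof. Consequently $C_q(i;\R)(\bar z)$ equals $\partial c$ for an explicit $c\in C_{q+1}(M;\R)$ built from $\bar z$, the lower-degree fillings produced above, and the universally bounded homotopies of Lemma~\ref{lem:awez} and of conjugation; tracking the constants gives $\|c\|_1\le c_{q,\kappa}\cdot\|z\|_1$ with $c_{q,\kappa}$ depending only on $q$ and $\kappa$. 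Packaging this filling as a linear map $S$ on $\partial_{q+1}C_{q+1}(H;\R)$ (linearity is free, since we may fix once and for all the lower-degree filling operators, which are linear by hypothesis) yields $(q,c_{q,\kappa})$-UBC for the composite; the vanishing $H_k(i\circ\psi\circ\varphi'\circ\varphi;\R)=0$ for $1\le k\le q$ is then immediate from the same chain-level identities (or simply from Theorem~\ref{thm:bdh} together with $H_k(\varphi';\R)=0$), and the $(k,c_{q,\kappa})$-UBC statements for $k<q$ follow from the inductive hypothesis after enlarging the constant.

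The main obstacle I anticipate is \emph{not} the homological vanishing — that is handed to us by Theorem~\ref{thm:bdh} — but the uniformity of all the constants: one must verify that (i) the chain homotopy witnessing conjugation-invariance of homology of~$M$ can be chosen with norm bounded by a function of the degree alone, independent of~$M$ and of the conjugating element, and (ii) the compositions of the various bounded operators ($\varphi$- and $\psi$-UBC fillings, the acyclic-model homotopies for $\varphi'$, the Eilenberg--Zilber homotopies $\Xi,\Omega$) do not accumulate a group-dependent factor. Point (i) is the heart of the matter and is exactly the normed enhancement that Matsumoto and Morita supply for the homeomorphism groups; here it has to be reproven at the level of an abstract group, which is again an acyclic-models argument carried out with explicit bounds, entirely parallel to Lemma~\ref{lem:awez}. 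Once uniformity is secured, the rest is careful but routine bookkeeping of norms through a finite diagram, which is why the detailed computation is deferred to Appendix~\ref{appx:proof}.
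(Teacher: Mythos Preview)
Your overall architecture is right and matches the paper: push the boundary~$z$ through the Alexander--Whitney/Eilenberg--Zilber machinery, use the mitosis homomorphism~$\mu$ together with a bounded conjugation homotopy to cancel the two copies, and collect the constants. But two points deserve correction.

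First, the ``main obstacle'' you flag is not one. The chain homotopy witnessing that conjugation by a fixed element acts trivially on~$C_*(M;\R)$ is given by an explicit formula (the standard prism/cone formula for the two homomorphisms~$\id$ and~$\gamma_k$), and its norm in degree~$q$ is at most~$q+1$, independently of the group and of the conjugating element; no acyclic-models argument is needed here. This is Lemma~\ref{lem:conj} in the appendix.

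Second, and this is a genuine gap: your handling of~$\varphi'$ is not correct. You write that the homological triviality of~$\varphi'$ comes ``combined with the uniform-boundedness of the chain homotopies realizing that triviality (obtained from acyclic models exactly as in Lemma~\ref{lem:awez})''. There are no such chain homotopies. The hypothesis is merely~$H_k(\varphi';\R)=0$ for~$1\le k\le q-1$; $\varphi'$ is an \emph{arbitrary} group homomorphism with this property, so there is no naturality for an acyclic-models argument to exploit, and in general no bounded operator~$P$ on cycles with~$\partial P=\varphi'_*$ exists. The paper's mechanism is different and is precisely why the chain has length four with~$\varphi'$ sandwiched between two UBC maps: after the Alexander--Whitney decomposition of~$\Delta(z)$ one first uses the sections~$S$ for~$\varphi$ to reduce to an element~$U$ lying in~$\bigoplus_{j=1}^{q-1} Z_j(H')\otimes Z_{q-j}(H')$ (cycles tensor cycles); then~$\varphi'_*\otimes\varphi'_*$, which has norm~$1$, converts this into an element of~$\bigoplus_{j=1}^{q-1} B_j(K)\otimes B_{q-j}(K)$ (boundaries tensor boundaries) \emph{with no further norm control needed from~$\varphi'$ itself}; and only then do the sections~$T$ coming from the UBC of~$\psi$ provide the controlled fillings. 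In short, $\varphi'$ contributes the qualitative ``cycles~$\mapsto$ boundaries'' step, while all quantitative control is supplied by~$\varphi$ and~$\psi$. Without this separation of roles your sketch does not go through, because the step you attribute to~$\varphi'$ simply is not available under the stated hypotheses.
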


We can then easily complete the proof of Theorem~\ref{thm:small} 
by induction:

\begin{proof}[Proof of Theorem~\ref{thm:small}]
  Let $M$ be a mitotic group, let $q\in \N_{>0}$, and let $z \in
  C_q(M;\R)$ be a boundary, say~$z = \partial_{q+1}(c)$ for some~$c
  \in C_{q+1}(M;\R)$. Because $z$ and $c$ are finite linear
  combinations of tuples of~$M$, there exists a finitely generated
  subgroup~$G_0$ such that $z \in C_q(G_0;\R)$ and $c \in
  C_{q+1}(G_0;\R)$; i.e., $z$ is a boundary in~$G_0$.

  As $M$ is mitotic, we can extend~$G_0$ to a sequence~$G_0 \subset
  G_1 \subset G_2 \subset \dots M$ of finitely generated subgroups
  of~$M$ such that each step~$G_j \hookrightarrow G_{j+1}$ is a
  mitosis. 
  We now proceed by induction over~$q$: If $q = 1$, then 
  the sequence
  \[ \xymatrix{%
      G_0 \ar[r]^-{i_0}
      & G_1 \ar[r]^-{i_1}
      & G_2 \ar[r]^-{i_2}
      & G_3 \ar[r]^-{i_3}
      & G_4
    }
  \]
  of mitoses satisfies the assumptions of
  Proposition~\ref{prop:ubcmitosis} in degree~$1$, and hence the
  composition~$i_3 \circ i_2 \circ i_1 \circ i_0$ satisfies $(1,c_{1,0})$-UBC.

  For the induction step, let $q \in \N_{>1}$, let 
  \[ n_q := \sum_{j=0}^q 3^q 
  \]
  and suppose that compositions of $n_{q-1}$~mitoses in~$M$
  satisfy~$(q-1,\kappa_{q-1})$-UBC, where $\kappa_{q-1}$ depends only
  on~$q$, but not on the groups involved.

  Then the chain 
  \[ \xymatrix{%
       G_0 \ar[r]
       & G_{n_{q-1}} \ar[r]
       & G_{2 \cdot n_{q-1}} \ar[r]
       & G_{3 \cdot n_{q-1} = n_q -1} \ar[r]^-{i_{n_q - 1}}
       & G_{n_q}
    }
  \]
  of inclusions satisfies the assumptions of
  Proposition~\ref{prop:ubcmitosis} in degree~$q$, and hence the
  inclusion~$G_0 \hookrightarrow G_{n_q}$ satisfies~$(q,c_{q,
    \kappa_{q-1}})$-UBC where $c_{q,\kappa_{q-1}}$ depends only
  on~$q$, but \emph{not} on~$z$ or the chain~$G_0 \subset G_1 \subset \dots$.

  In particular, there is a chain~$c' \in C_{q+1}(M;\R)$ with
  \[ \partial_{q+1} c' = z \in C_{q}(M;\R) 
     \quad\text{and}\quad
     \|c'\|_1 \leq \kappa_q \cdot \|z\|_1.
  \]
  Hence, $M$ satisfies $(q,\kappa_q)$-UBC. Because $M$ is acyclic by
  Theorem~\ref{thm:bdh}, we obtain~$H^q_b(M;\R) \cong 0$ from
  Theorem~\ref{thm:ubc}.
\end{proof}


\appendix
\section{Detailed proof of Proposition~\ref{prop:ubcmitosis}}\label{appx:proof}

For the convenience of the reader, we present a detailed proof of
Proposition~\ref{prop:ubcmitosis}, following the arguments of Matsumoto
and Morita~\cite{mm}:

\begin{proof}[Proof of Proposition~\ref{prop:ubcmitosis}]
  It suffices to prove the claims in degree~$q$. We abbreviate~$f :=
  \psi \circ \varphi' \circ \varphi$.  The fact that $H_q(i \circ
  f;\R) = 0$ was proved by Baumslag, Dyer,
  Heller~\cite[Proposition~4.1]{bdh}. However, in order to make the
  normed refinement more transparent, we repeat the argument:

  Let $d, s \in M$ be witnesses for the mitosis~$i \colon G \hookrightarrow
  M$. Then 
  \begin{align*}
    \mu \colon  G \times G & \longrightarrow M \\
    (g',g) & \longmapsto g' \cdot g^s
  \end{align*}
  is a group homomorphism. Denoting the diagonal maps by~$\Delta_H$, $\Delta_G$ 
  and the conjugations on~$M$ by~$\gamma_d = \args^d$, $\gamma_s = \args^s$, we obtain 
  \[ \gamma_d \circ i \circ f = \mu \circ \Delta_G \circ f 
     = \mu \circ (f \times f) \circ \Delta_H.
  \]
  On the other hand, the K\"unneth theorem 
  (and its naturality) and the homological assumption on~$H_*(f;\R)$ shows 
  that the diagram
  \[ \xymatrix{%
       H_q(H \times H;\R) 
       \ar[d]_{H_q(f \times f;\R)}
       \ar[r]^-{\txt{\raisebox{1em}{\scriptsize$H_q(p_1;\R) \oplus H_q(p_2;\R)$}}}
       & H_q(H;\R) \oplus H_q(H;\R)
       \ar[d]^{H_q(f;\R) \oplus H_q(f;\R)}
       \\
       H_q(G \times G ;\R)
       \ar[d]_{H_q(\mu;\R)}
       & H_q(G;\R) \oplus H_q(G;\R)
       \ar[d]^{H_q(i;\R) \oplus H_q(\gamma_s \circ i;\R)}
       \ar[l]^-{\txt{\raisebox{-1em}{\scriptsize$H_q(i_1;\R) + H_q(i_2;\R)$}}}
       \\
       H_q(M;\R)
       & H_q(M;\R) \oplus H_q(M;\R) \ar[l]^-{\id + \id}
  }
  \]
  is commutative; here, $i_1, i_2, p_1, p_2$ denote the corresponding 
  inclusions and projections of the factors. Hence, we obtain
  \begin{align*}
    H_q(\gamma_d;\R) \circ H_q(i \circ f) 
    & = H_q(\mu;\R) \circ H_q(f \times f;\R) \circ H_q(\Delta_H;\R)
    \\
    & = H_q(i \circ f;\R) + H_q(\gamma_s;\R) \circ H_q(i \circ f;\R).
  \end{align*}
  As conjugations act trivially on homology, $H_q(\gamma_d;\R) = \id 
  = H_q(\gamma_s;\R)$, and so $H_q(i \circ f ;\R) = 0$.

  We will now refine this argument and prove that $i\circ f$ satisfies
  a strong uniform boundary condition in degree~$q$:

  Let $S_0, \dots, S_{q-1}$ and $T_0, \dots, T_{q-1}$ be sections that
  witness that $\varphi$ and $\psi$ satisfy $(0,\kappa)$-UBC, \dots,
  $(q-1,\kappa)$-UBC; for simplicity, we omit the indices and denote
  all these maps by~$S$ or~$T$ respectively. Let $z \in
  B_q(H) := \partial_{q+1}(C_{q+1}(H;\R))$. We construct an explicit
  $\partial_{q+1}$-primitive for~$C_q(i \circ f;\R)$ in two steps: We
  first deal with the K\"unneth argument, and then we will take care
  of the conjugations.

  \emph{Normed refinement of the K\"unneth argument.}
  We first study the intermediate degree part of~$z$, viewed
  in~$C_*(H;\R) \otimes_\R C_*(H;\R)$, i.e., the chain
  \[ D(z) := A \circ \Delta (z) - z \otimes 1 - 1 \otimes z,  
  \]
  where, $A \colon C_*(H \times H;\R) \longrightarrow C_*(H;\R)
  \otimes_\R C_*(H;\R)$ is the Alexander-Whit\-ney map
  (Lemma~\ref{lem:awez}) and $\Delta
  := C_*(\Delta_H;\R)$.
  Moreover, we write $\varphi_* := C_*(\varphi;\R)$ etc.  

  Similar to Matsumoto and Morita~\cite[p.~544]{mm} we define the map
  \begin{align*} 
    E := & \ (\psi_* \otimes_\R \psi_*) \circ (\varphi'_* \otimes_\R \varphi'_*) 
             \circ (S \otimes_\R S) \circ (\id \otimes_\R \partial)
    \\
    + & \ \bigl(T \otimes_\R (\psi_* - T \circ \partial)\bigr) 
    \circ (\varphi'_* \otimes_\R \varphi'_*) 
    \circ \bigl(\varphi_* \otimes_\R \varphi_* 
          - \partial \circ (S \otimes_\R S) \circ (\id \otimes_\R \partial) \bigr)
    \\
    \colon & \ B_q \longrightarrow \bigl(C_*(G;\R) \otimes_\R C_*(G;\R)\bigr)_{q+1}
  \end{align*}
  on
  $ B_q := \im \bigl(\text{$\partial_{q+1, C_*(H;\R) \otimes_\R C_*(H;\R)}$}\bigr) 
            \cap \bigoplus_{j = 1}^{q-1} C_j(H;\R) \otimes_\R C_{q-j}(H;\R). 
  $

  \begin{lem}[explicit primitives for~$D(z)$]\label{lem:E}
  This map~$E$ has the following properties:
  \begin{enumerate}
    \item The map~$E$ is well-defined.
    \item We have~$D(z) \in B_q$ and the map~$E$ produces explicit
      primitives, i.e., 
      \[ (f_* \otimes_\R f_*) D(z) = \partial_{q+1} E \bigl( D(z)\bigr).
      \]
    \item Moreover,  
      \[ \| E \| \leq
         \kappa + 2 \cdot (q + 1) \cdot \kappa^2 
         \cdot \bigl(1+ (q+1) \cdot \kappa + (q+1)^2 \cdot \kappa^2\bigr)
      \]
      with respect to the norms induced by the respective $\ell^1$-norms. Notice 
      that this bound does only depend on~$q$ and~$\kappa$, but \emph{not} on 
      the groups or homomorphisms that are involved.
  \end{enumerate}
  \end{lem}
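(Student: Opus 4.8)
The plan is to follow Matsumoto and Morita~\cite{mm} and to read~$E$ as a bounded, functorial version of the classical argument that $H_q(i\circ f;\R)=0$: the first summand of~$E$ uses the sections~$S$ for~$\varphi$ to split the K\"unneth part~$D(z)$ of~$z$ into its two tensor legs, and the second summand uses the homological hypothesis~$H_*(\varphi';\R)=0$ together with the sections~$T$ for~$\psi$ to bound the remaining cycle. First I would record the shape of~$D(z)$. Writing out the Alexander--Whitney map of Lemma~\ref{lem:awez} gives $A\circ\Delta(z)=z\otimes 1+1\otimes z+D(z)$, so $D(z)$ lies in $\bigoplus_{j=1}^{q-1}C_j(H;\R)\otimes_\R C_{q-j}(H;\R)$; moreover $A\circ\Delta$ is a chain map and $z=\partial_{q+1}y$ for some $y\in C_{q+1}(H;\R)$, so $D(z)=\partial_{q+1}\bigl(A\circ\Delta(y)-y\otimes 1-1\otimes y\bigr)$ lies in the image of~$\partial_{q+1}$; hence $D(z)\in B_q$, which is the first half of~(2).

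For well-definedness~(1), the point is that every $w\in B_q$ is a cycle for the tensor differential, and unravelling $\partial w=0$ bidegree by bidegree yields, for the components $w_j\in C_j(H;\R)\otimes_\R C_{q-j}(H;\R)$, the identities $(\partial\otimes\id)\,w_{j+1}=(-1)^{j+1}(\id\otimes\partial)\,w_j$. So each tensor summand of $(\id\otimes\partial)\,w$ is a tensor product of two boundaries lying in degrees~$\le q-1$, and therefore the sections $S_0,\dots,S_{q-1}$ make $(S\otimes S)\circ(\id\otimes\partial)$ defined on~$B_q$; this handles the first summand of~$E$. For the second summand I would apply $\partial\otimes\id$ to $\bigl(\varphi_*\otimes\varphi_*-\partial\circ(S\otimes S)\circ(\id\otimes\partial)\bigr)(w)$ and check, using $\partial\circ S=\varphi_*|_{\im\partial}$, the fact that boundaries are cycles, and the relations above, that the result vanishes; thus the first tensor leg of that chain is a cycle in $C_*(H';\R)$ in degrees $\{1,\dots,q-1\}$, so $H_k(\varphi';\R)=0$ turns its image under~$\varphi'_*$ into a boundary in~$C_*(K;\R)$, on which the sections~$T$ are defined, while $\psi_*-T\circ\partial$ is defined on all chains. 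This proves~(1).

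For the primitive identity in~(2), write $u:=(S\otimes S)\circ(\id\otimes\partial)(w)$ and $v:=(\varphi_*\otimes\varphi_*)(w)-\partial u$, so that the second summand of~$E$ is $\bigl(T\otimes(\psi_*-T\circ\partial)\bigr)\circ(\varphi'_*\otimes\varphi'_*)(v)$. The decisive observation is that \emph{both} tensor legs of~$v$ are cycles: the first leg by the computation just described, and the second leg because applying $\id\otimes\partial$ and again using $\partial\circ S=\varphi_*|_{\im\partial}$ and $\partial(\text{boundary})=0$ gives $(\id\otimes\partial)\,\partial u=(\varphi_*\otimes\varphi_*)(\id\otimes\partial)(w)$, hence $(\id\otimes\partial)(v)=0$. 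I would then compute $\partial_{q+1}E(w)$ summand by summand, using that $\varphi'_*,\psi_*$ are chain maps, $\partial\circ S=\varphi_*|_{\im\partial}$, $\partial\circ T=\psi_*|_{\im\partial}$ (which also gives $\partial\circ(\psi_*-T\circ\partial)=0$), and $(\id\otimes\partial)(v)=0$: the contributions of~$v$ telescope and the sole remaining error term, of the shape $(\psi_*\otimes T)\circ(\id\otimes\partial)$ applied to $(\varphi'_*\otimes\varphi'_*)(v)$, vanishes by $(\id\otimes\partial)(v)=0$, leaving $\partial_{q+1}E(w)=(f_*\otimes_\R f_*)(w)$, which is~(2).

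Finally, for~(3) I would bound $\|E\|$ from its two defining summands by the triangle inequality and sub-multiplicativity of operator norms, estimating each elementary factor: the sections~$S$ and~$T$ (hence $S\otimes S$) by~$\kappa$ and~$\kappa^2$, the chain maps $\varphi_*,\varphi'_*,\psi_*$ induced by group homomorphisms (and their tensor squares) by~$1$, and the boundary operators occurring by~$q+1$ in the relevant degrees; collecting these factors yields the stated polynomial in~$\kappa$ and~$q$, which by construction does not mention the groups or homomorphisms. The main obstacle is the bookkeeping in the middle two steps: tracking signs and bidegrees and pinning down exactly where $S$, $T$, and $H_*(\varphi';\R)=0$ may be invoked, and in particular the non-obvious cancellation that makes $\partial_{q+1}E(w)$ collapse onto $(f_*\otimes_\R f_*)(w)$ -- this is precisely what forces the two summands of~$E$ to have the form stated.
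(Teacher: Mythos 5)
Your proposal is correct and follows essentially the same route as the paper's proof: the same bidegree analysis of $\partial w=0$ together with $\partial\circ S=\varphi_*|_{\im\partial}$ for well-definedness, the same auxiliary chain $U$ (your $v$) with both tensor legs cycles so that $H_*(\varphi';\R)=0$ and the maps $T$ become applicable, the same telescoping computation yielding $\partial_{q+1}E=(f_*\otimes_\R f_*)$ on $B_q$, and the same norm bookkeeping for~(3). The only harmless deviations are organizational: you check $(\partial\otimes\id)U=0$ directly rather than deducing it from $(\id\otimes\partial)U=0$ and $\partial U=0$, and you exhibit the explicit primitive $A\circ\Delta(y)-y\otimes 1-1\otimes y$ for $D(z)$ instead of the paper's ``straightforward calculation''.
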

 
  The proof of this lemma is given below. We now continue with the
  proof of Proposition~\ref{prop:ubcmitosis}: In view of the
  naturality of the cross-product map~$B \colon C_*(\args;\R)
  \otimes_\R C_*(\args;\R) \longrightarrow C_*(\args \times \args;\R)$
  and Lemma~\ref{lem:awez} we obtain 
  \begin{align*}
    (f\times f)_* \circ \Delta (z) 
    & = (f \times f)_* \circ B \circ A \circ \Delta (z) 
      + (f \times f)_* (\partial \circ \Xi + \Xi \circ \partial) \circ \Delta(z)
      \\
    & = B \circ (f_* \otimes_\R f_*) \circ A \circ \Delta(z) + (f \times f)_* \circ \partial \circ \Xi \circ \Delta(z).  
  \end{align*}
  The construction of~$D(z)$ and the explicit primitives from Lemma~\ref{lem:E} now 
  lead to
  \begin{align*}
    (f\times f)_* \circ \Delta (z) 
    & = (f \times f)_* \circ B (z \otimes 1) + (f \times f)_* \circ B (1 \otimes z) 
      + \partial E' (z),
  \end{align*}
  where
  \[ E' := B \circ E \circ D + (f \times f)_* \circ \Xi \circ \Delta; 
  \]
  notice that $E'$ is bounded and that $\|E'\|$ admits a bound that
  only depends on~$q$ and~$\kappa$, but not on the specific groups or
  homomorphisms.
  By definition of the cross-product, we have~$B(z \otimes 1) =
  j_{1*}(z)$ and $B(1 \otimes z) = j_{2*}(z)$, where $j_1, j_2 \colon
  H \longrightarrow H \times H$ are the inclusions of the
  factors. Therefore,
  \begin{align*} 
    (f\times f)_* \circ \Delta (z) 
     & = (f \times f)_* \circ j_{1*} (z) + (f \times f)_* \circ j_{2*} (z) + \partial \circ E'(z).
    \\
     & = i_{1*} \circ f_* (z) + i_{2*} \circ f_* (z) + \partial \circ E'(z).
  \end{align*}

  \emph{Normed refinement of the conjugation argument.}
  Applying~$\mu_*$ to this equation and using the chain
  homotopy~$\Theta$ from Lemma~\ref{lem:conj} below associated with
  the conjugation by~$k := s \cdot d^{-1}$ on~$M$ leads then to 
  \begin{align*}
    (i \circ f)_* (z) 
    & = \bigl(\mu \circ (f \times f) \circ j_1\bigr)_* (z)
    \\
    & = \bigl(\mu \circ (f \times f) \circ \Delta_H \bigr)_* (z)
    - \bigl(\mu \circ (f\times f) \circ j_2\bigr)_*(z) 
    - \mu_* \circ \partial \circ E'(z)
    \\
    & =
    \gamma_{d*} \circ (i \circ f)_* (z) - \gamma_{s*} \circ (i \circ f)_*(z) 
    - \partial \circ \mu_* \circ E'(z)
    \\
    & = 
    \gamma_{d*} \circ (i \circ f)_* (z) - \gamma_{k*} \circ \gamma_{d*} \circ (i \circ f)_*(z) 
    - \partial \circ \mu_* \circ E'(z)
    \\
    & = (\partial \circ \Theta + \Theta \circ \partial) \circ (i \circ f)_*(z) 
    - \partial \circ \mu_* \circ E'(z)
    \\
    & = \partial \bigl( \Theta \circ (i \circ f)_* (z) - \mu_* \circ E' (z)\bigr).
  \end{align*}
  Because $\| \Theta \circ (i \circ f)_* - \mu_* \circ E'\|$ admits a
  bound~$c_{q,\kappa}$ on~$B_q(H)$ that only depends on~$q$
  and~$\kappa$ (as the same holds for~$E'$ and~$\Theta$) we see that
  $i \circ f$ satisfies $(q, c_{q,\kappa})$-UBC, as desired.
\end{proof}

\begin{proof}[Proof of Lemma~\ref{lem:E}]
  We mainly follow the corresponding arguments by Matsumoto and Morita. 
  
  \emph{Ad~1.} Showing that $E$ is well-defined is the most delicate
  point of the whole proof of Theorem~\ref{thm:small}. Let $x \in
  B_q$. Because $x$ is a boundary, a straightforward calculation shows 
  that
  \[ (\id \otimes_\R \partial) (x) \in \bigoplus_{j=1}^{q-1} B_j(H) \otimes_\R B_{q-1-j}(H); 
  \]
  here, one should also note that $B_0(H) = 0$ by definition of the
  chain complex~$C_*(H;\R)$. In particular, $(S \otimes_\R S)$ indeed 
  can be applied to~$(\id \otimes_\R \partial) (x)$. This takes care 
  of the first summand of~$E$ and the last part of the second summand 
  of~$E$. 

  For the remaining terms, we consider the element
  \[ U(x) := \bigl(
              \varphi_* \otimes \varphi_* 
              - \partial \circ (S\otimes_\R S) \circ (\id \otimes_\R \partial)
            \bigr)
            (x). 
  \]
  Using the fact that the maps of type~$S$ are sections of~$\varphi_*$
  on boundaries, one readily computes
  $(\id \otimes_\R \partial) \circ U(x) = 0.
  $ 
  Exactness of the tensor product over~$\R$ then implies that
  \[ U(x) \in  \bigoplus_{j=1}^{q-1} C_j(H;\R) \otimes_\R Z_{q-j}(H), 
  \]
  where $Z_*(H)$ denotes the cycles in~$C_*(H;\R)$. On the other hand,
  we clearly also have~$\partial U(x) = 0$, and so $(\partial
  \otimes_\R \id) \circ U(x) = 0$ and (again by exactness of the
  tensor product over~$\R$) it follows that
  \[ U(x) \in \bigoplus_{j=1}^{q-1} Z_j(H) \otimes_\R Z_{q-j}(H).
  \]
  By assumption, $H_k(\varphi';\R) = 0$ for all~$k \in \{1,\dots,q-1\}$; thus, 
  \[ (\varphi'_* \otimes_\R \varphi'_*) \circ U(x) 
     \in \bigoplus_{j=1}^{q-1} B_j(H) \otimes_\R B_{q-j}(H).
  \]
  In particular, we indeed can apply the maps of type~$T$ to all components
  of~$(\varphi'_* \otimes_\R \varphi'_*) U(x)$ and of~$(\id \otimes_\R
  \partial) \circ (\varphi'_* \otimes_\R \varphi'_*) \circ U(x)$. Therefore, $E$ 
  is well-defined.

  \emph{Ad~2.} Because $z$ is a boundary, a straightforward calculation 
  shows that also~$D(z)$ is a boundary. Moreover, by construction of~$D(z)$, 
  all summands of~$D(z)$ are of intermediate degree. Hence, $D(z) \in B_q$, 
  and so $E$ can indeed be applied to~$D(z)$. 

  Because $(\id \otimes_\R \partial) \circ U(D(z)) = 0$, a calculation
  shows that
  \begin{align*}
    \partial_{q+1} E\bigl(D(z)\bigr)
    & = (\psi_* \otimes_\R \psi_*) \circ (\varphi'_* \otimes_\R \varphi'_*) 
        \circ (\varphi_* \otimes_\R \varphi_*) \bigl(D(z)\bigr)\\
    & =  (f_* \otimes_\R f_*) D(z).
  \end{align*}

  \emph{Ad~3.} The bound on~$\|E\|$ follows directly from the explicit
  definition of~$E$ and corresponding bounds on the building blocks
  of~$E$: Chain maps induced by group homomorphisms have norm~$1$, the
  maps of type~$S$ and~$T$ have norms bounded by~$\kappa$
  (by assumption), and the boundary operator on~$C_q(\args;\R)$ has
  norm bounded by~$q + 1$.
\end{proof}

\begin{lem}\label{lem:conj}
  Let $G$ be a group and let $k \in G$. Then
  \begin{align*}
    \Theta_q \colon 
    C_q(G;\R) & \longrightarrow C_{q+1}(G;\R) \\
    G^q \ni (g_1, \dots, g_q) 
    & \longmapsto 
    \sum_{j=1}^{q+1} (-1)^j \cdot 
    (g_1, \dots, g_{j-1}, k, k^{-1} \cdot g_j \cdot k, \dots, k^{-1} \cdot g_q \cdot k)
  \end{align*}
  defines a chain homotopy between the identity
  and~$C_*(\gamma_k;\R)$, where $\gamma_k$ denotes the conjugation
  on~$G$ by~$k$. Moreover, for all~$q \in \N$ we have
  \[ \| \Theta_q \| \leq q+1. 
  \]
\end{lem}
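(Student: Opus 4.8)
The plan is to verify the lemma's two assertions directly: the chain-homotopy identity $\partial_{q+1}\circ\Theta_q + \Theta_{q-1}\circ\partial_q = \id_{C_q(G;\R)} - C_q(\gamma_k;\R)$ (with $\gamma_k(g) = k^{-1}gk$, the convention forced by the displayed formula), and then the estimate $\|\Theta_q\|\le q+1$. The identity is the classical statement that an inner automorphism induces the identity on group homology, together with the standard explicit homotopy on the bar complex, so one option is simply to invoke this; but since the appendix aims to be self-contained I would rather carry out the short bookkeeping.

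Fix a basis element $(g_1,\dots,g_q)\in G^q$, abbreviate $h_i := k^{-1}g_ik$, and for $j\in\{1,\dots,q+1\}$ write $\tau_j := (g_1,\dots,g_{j-1},k,h_j,\dots,h_q)\in G^{q+1}$, so that $\Theta_q(g_1,\dots,g_q) = \sum_{j=1}^{q+1}(-1)^j\tau_j$. Applying $\partial_{q+1}$ decomposes each $\tau_j$ into its drop-first face, its $q$ merge faces, and its drop-last face. The first point is the cancellation at the inserted slot: merging the entries $k$ and $h_j=k^{-1}g_jk$ of $\tau_j$ gives $(g_1,\dots,g_{j-1},g_jk,h_{j+1},\dots,h_q)$, which is identical to the face of $\tau_{j+1}$ obtained by merging $g_j$ with the inserted $k$; tracking signs shows these two contributions to $\partial_{q+1}\Theta_q$ cancel in pairs for $j=1,\dots,q$. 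The remaining faces of the $\tau_j$ fall into families — merging two $g$-entries, merging two $h$-entries, dropping the last $h$-entry, and (for $j\ge2$) dropping the first $g$-entry — and each such face matches, up to an overall sign, a term of $\Theta_{q-1}\partial_q(g_1,\dots,g_q)$ obtained by applying $\Theta_{q-1}$ to the corresponding face of $(g_1,\dots,g_q)$; so these all cancel. What survives is exactly $(g_1,\dots,g_q)$ with coefficient $+1$ (the drop-last face of $\tau_{q+1}=(g_1,\dots,g_q,k)$) and $(h_1,\dots,h_q) = C_q(\gamma_k;\R)(g_1,\dots,g_q)$ with coefficient $-1$ (the drop-first face of $\tau_1=(k,h_1,\dots,h_q)$), which is the asserted identity.

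For the norm bound, observe that $\Theta_q$ sends each basis element of $C_q(G;\R)$ to a $\{0,\pm1\}$-linear combination of the $q+1$ basis elements $\tau_1,\dots,\tau_{q+1}$ of $C_{q+1}(G;\R)$; hence $\|\Theta_q(g_1,\dots,g_q)\|_1\le q+1$, and by the triangle inequality $\|\Theta_q(c)\|_1\le(q+1)\,\|c\|_1$ for every $c\in C_q(G;\R)$, i.e. $\|\Theta_q\|\le q+1$. In particular $\Theta_q$ extends by continuity to the $\ell^1$-completion with the same bound, which is what is used in the proof of Proposition~\ref{prop:ubcmitosis}.

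The only real obstacle is the combinatorics: keeping the signs straight in the double sum $\partial_{q+1}\Theta_q$ and correctly pairing each of its faces with either a second face of $\partial_{q+1}\Theta_q$ or a term of $\Theta_{q-1}\partial_q$, with particular care near the inserted letter $k$ and at the two ends of the tuple, where a merge face degenerates into a drop face. There is no conceptual difficulty — the identity is standard and the norm bound is immediate — so if brevity is preferred the computation could be replaced by a reference, retaining only the one-line estimate.
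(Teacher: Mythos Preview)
Your proposal is correct and follows exactly the route the paper takes: the paper's proof reads, in full, ``This is a straightforward computation,'' and you have supplied precisely that computation together with the obvious norm estimate. Your remark that the displayed formula forces the convention~$\gamma_k(g)=k^{-1}gk$ (rather than the paper's earlier convention~$g^k=kgk^{-1}$) is well observed; this discrepancy is harmless for the application in Proposition~\ref{prop:ubcmitosis}, where only the existence of a degree-wise bounded homotopy between the identity and the map induced by some conjugation is needed.
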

\begin{proof}
  This is a straightforward computation.
\end{proof}



\medskip
\vfill

\noindent
\emph{Clara L\"oh}\\[.5em]
  {\small
  \begin{tabular}{@{\qquad}l}
    Fakult\"at f\"ur Mathematik\\
    Universit\"at Regensburg\\
    93040 Regensburg\\
    Germany\\
    \textsf{clara.loeh@mathematik.uni-regensburg.de}\\
    \textsf{http://www.mathematik.uni-regensburg.de/loeh}
  \end{tabular}}
\end{document}